\documentclass[10pt,a4paper]{article}

\usepackage{graphicx} 
\usepackage{tikz}
\usetikzlibrary{hobby,calc,positioning}
\usetikzlibrary{decorations.markings}
\tikzstyle{vertex}=[draw=black,fill = black,inner sep = 1pt,circle]
\usepackage[dvipsnames,svgnames,table]{xcolor}

\usepackage{amssymb}
\usepackage{amsthm}
\usepackage{amsmath}
\usepackage{enumitem}

\usepackage[hyphens]{url}
\usepackage[linktoc=all,hidelinks,colorlinks,unicode=true]{hyperref} 
\usepackage[capitalise,compress,nameinlink,noabbrev]{cleveref} 
\hypersetup{linkcolor={blue!70!black},citecolor={red!70!black},urlcolor={purple!70!black}}

\usepackage{todonotes}
\usepackage{subcaption}

\theoremstyle{plain}
\newtheorem{theorem}{Theorem}
\newtheorem{lemma}[theorem]{Lemma}
\newtheorem{problem}{Problem}

\newtheorem{claim}{Claim}[theorem]

\newtheorem{observation}[theorem]{Observation}

\theoremstyle{definition}

\newcommand{\defin}[1]{\emph{\textcolor{ForestGreen}{#1}}}

\newenvironment{poc}{\begin{proof}[Proof of
    Claim]}{\end{proof}}

\renewcommand{\vec}[1]{#1}

\newcommand{\T}{\mathcal{T}}

\newcommand{\FD}[2]{\vec{\mathcal{F}}_{#2}(#1)}
\DeclareMathOperator{\dist}{dist}

\title{A Gray code for arborescences of tournaments}
\author{
  Marthe Bonamy \footnotemark[1] \and
  Michael Hoffmann \footnotemark[2] \and
  Clément Legrand-Duchesne \footnotemark[3] \and
  Günter Rote \footnotemark[4]  
}

\begin{document}

\maketitle

\footnotetext[1]{CNRS, LaBRI, Université de Bordeaux, France (\textsf{\href{mailto:marthe.bonamy@u-bordeaux.fr}{marthe.bonamy@u-bordeaux.fr}})}
\footnotetext[2]{Department of Computer Science, ETH Zürich, Switzerland (\textsf{\href{mailto:hoffmann@inf.ethz.ch}{hoffmann@inf.ethz.ch}})}
\footnotetext[3]{Theoretical Computer Science Department, Faculty of Mathematics and Computer Science, Jagiellonian University, Kraków, Poland
(\textsf{\href{mailto:clement.legrand-duchesne@uj.edu.pl}{clement.legrand-duchesne@uj.edu.pl}}).}
\footnotetext[4]{Institut für Informatik, Freie Universität Berlin
(\textsf{\href{mailto:rote@inf.fu-berlin.de}{\nolinkurl{rote@inf.fu-berlin.de}}}).}

\begin{abstract}
    We consider the following question of Knuth: given a directed graph $G$ and a root $r$, can the arborescences of $G$ rooted in $r$ be listed such that any two consecutive arborescences differ by only one arc? Such an ordering is called a pivot Gray code and can be formulated as a Hamiltonian path in the reconfiguration graph of the arborescences of $G$ under arc flips, also called flip graph of $G$. We give a positive answer for tournaments and explore several conditions showing that the flip graph of a directed graph may contain no Hamiltonian cycles.
\end{abstract}

A \defin{Gray code} is a linear or cyclic order on the elements of a fixed set
(usually bit representations of numbers between 0 and $2^n-1$), such that
consecutive elements differ on exactly one bit. Gray codes were originally
considered to avoid the errors introduced by unperfectly synchronised physical
switches, causing a period of transition between two consecutive binary numbers.
A Gray code can also be seen as a Hamiltonian path or even a Hamiltonian cycle in
the reconfiguration graph: the graph whose vertices are the enumerated objects,
with an edge between any two objects at Hamming distance one (see Mütze's
extensive survey on Gray codes for combinatorial
structures~\cite{mutze2023Gray}).

In the context of spanning trees of a graph $G$, two types of Gray codes can be
considered. The most general one is an order with the
\defin{revolving door} property: each spanning tree in the sequence is obtained
from the previous one by an edge exchange, that is, by removing an edge and
adding another one. The first such algorithm was given by Cummins in
1966~\cite{cummins1966Hamilton}. In fact, Cummins showed that the
reconfiguration graph of spanning trees under edge exchanges of any graph $G$,
also called \defin{flip graph of $G$}, is edge-Hamiltonian. Namely, for each edge
of the flip graph there exists a Hamiltonian cycle passing through this
edge. Shank~\cite{shank1968Note} then gave a short and simple proof of this
result. As of today, the most efficient known Gray code enumeration algorithm for spanning trees runs
in constant delay between consecutive outputs on average and was given by
Smith~\cite{smith1997Generating}. Finally, the set of spanning trees of a graph
$G$ are the bases of the matroid formed by the edges of $G$ and Holzmann and
Harary~\cite{holzmann1972Tree} generalised Shank's proof to the reconfiguration
graph of the bases of any matroid under element exchanges.

The second type of Gray codes for spanning trees are those with the \defin{strong
  revolving door} property, that is, those in which the edges added and deleted
at each step share a common endpoint. These Gray codes are also referred to as
\defin{pivot Gray code}.

\begin{problem}
  \label{prob:pivot_undirected}
  Does every graph $G$ admit a pivot Gray code on its spanning trees?
\end{problem}

Although it is rather simple to prove that the corresponding reconfiguration
graphs is connected for any graph $G$, it remains open whether all graphs admit
a pivot Gray code for spanning trees. Indeed, one of the main techniques used to
construct a Gray code for spanning trees consists in partitioning the spanning
trees into two sets: those containing a specific edge $e$ and those avoiding
it. One can then apply induction on both sets by considering the graph $G/e$
where $e$ is contracted, and the graph $G-e$ where $e$ is removed,
respectively. However, the uncontraction of the edge $e$ does not preserve the
strong revolving door property, as the edges incident to $e$ are 
adjacent in $G/e$ but not in~$G$.  Nevertheless, pivot Gray codes for spanning
trees have been constructed in some graph classes, namely for fan
graphs~\cite{CGS-2024-pivot} and more generally for outerplanar
graphs~\cite{behrooznia2024Listing}.

As Knuth noted~\cite[Answer to Exercise 7.2.1.6–102]{kn4a},
\cref{prob:pivot_undirected} would immediately follow from the existence of a
Gray code on the rooted arborescences of a directed graph, by replacing each edge by
two arcs going in both directions. More precisely, an \defin{arborescence} of a
directed graph $G$ rooted in $r$ is a spanning tree of $G$ directed away
from $r$. Given two arborescences that differ in exactly one arc, the arcs added
and deleted must point to the same vertex $u$, for this vertex $u$ to be
accessible in both arborescences from $r$. Therefore, Gray codes on the
arborescences of $G$ naturally have the strong revolving door property. The
problem of designing such a Gray code was proposed by Knuth, with an estimated
difficulty of 46/50:

\begin{problem}[Exercise 7.2.1.6–102 in~\cite{kn4a}]\label{prob:pivot_directed}
  Does every directed graph $G$ admit a pivot Gray code on its arborescences
  rooted in a given vertex?
\end{problem}

In 1967, Chen~\cite[Theorem~1]{chen-1967} claimed that for any directed graph
$G$ and vertex $r\in V(G)$, the flip graph on the arborescences of $G$ rooted in $r$ contains a Hamiltonian cycle, provided
 there are at least three arborescences. A counterexample to this claim was
provided by Rao and Raju~\cite{rao-raju-1972} in 1972: they constructed a family
of directed graphs whose flip graph is a path. These digraphs are not just a
sporadic exception: we characterise the cases where an arborescence has degree 1
in the flip graph. In~\cref{sec:bipartite}, we construct a greater
variety of counterexamples of directed graphs with unbalanced bipartite flip
graphs, hence without Hamiltonian cycle.
However, as we will show, the imbalance 
for
these counterexamples cannot exceed one; thus they do not contradict the existence
of a Hamiltonian \emph{path} in the flip graph.
Finally, our main
result is the construction of a pivot Gray code on the arborescences of any
tournament (see~\cref{sec:tournaments}):
\begin{theorem}\label{thm:tournaments}
  Let $G$ be a tournament and $r$ be a vertex of $G$.
  The flip graph of the arborescences of $G$ rooted in $r$ admits a Hamiltonian path.
\end{theorem}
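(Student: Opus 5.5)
We argue by induction on $n=|V(G)|$ and prove a stronger, more flexible statement, so that the Hamiltonian paths of smaller instances can be glued together. Write $\T(G,r)$ for the set of arborescences of $G$ rooted in $r$.

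\textbf{Base cases and strengthened hypothesis.} For $n\le 2$ the flip graph has at most one vertex, so the claim is trivial. The strengthened hypothesis will be: for every tournament $G$ on $n\ge 3$ vertices, every root $r$, and a suitably chosen vertex $v\neq r$, the flip graph of $\T(G,r)$ has a Hamiltonian path. This path should start at a prescribed arborescence and end at an arborescence in which $v$ is a leaf whose parent can be chosen among a prescribed set. The exact form of the prescribed endpoints is the part I expect to have to tune.

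\textbf{Decomposition.} The natural split is by leaves. Pick a vertex $v\ne r$ and partition $\T(G,r)$ as follows.
\begin{itemize}
\item The first class consists of arborescences in which $v$ is a leaf. These are exactly an arborescence of $G-v$ (still a tournament) rooted at $r$, together with an arc $uv$ for some in-neighbour $u$ of $v$.
\item The second class consists of arborescences in which $v$ has children.
\end{itemize}

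\textbf{Traversing the first class.} Here I would traverse the product structure in a boustrophedon way. Follow a Hamiltonian path $T_1,\dots,T_m$ of $\T(G-v,r)$ given by induction. While at each $T_i$, cycle through the choices of parent $u\in N^-(v)$; these are flips of the arc entering $v$, hence adjacent in the flip graph. Then move from $T_i+uv$ to $T_{i+1}+uv$, which is a single flip since $T_i\to T_{i+1}$ is. Reversing the order of parents at alternate steps keeps this a valid path.

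\textbf{Handling the second class.} Choosing $v$ to have small in-degree, for instance an in-neighbour of $r$ or a vertex with in-degree $1$ if one exists, keeps the second class under control. To deal with it, I would instead choose a different partition by the parent of a fixed vertex. Take $v$ with an arc $rv$ or $vr$, and partition $\T(G,r)$ according to the in-arc of $v$. The class where $v$'s parent is $r$ can be handled by contracting $rv$: arborescences of $G$ containing $rv$ correspond to arborescences of $G/rv$ rooted at the merged vertex. Since $G/rv$ may have parallel arcs, it is no longer a tournament. Hence the induction should be over the larger class of \emph{semicomplete multi-digraphs}, meaning digraphs where every pair is joined by at least one arc, possibly several in either direction. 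This class is closed under contracting arcs out of the root. The remaining classes, where $v$'s parent is some $u\ne r$, are glued in sequence.

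\textbf{Main obstacle.} The hard step is the gluing. Consecutive blocks must be joined by a single flip, so the endpoint of the Hamiltonian path in one block must be adjacent to the starting point of the next. I would try to ensure this by strengthening the hypothesis so that the path starts at a prescribed star-like arborescence, for example all vertices attached to $r$ where possible, and ends at one we can control. Semicompleteness is what makes such star-like arborescences exist and lets a single parent-flip of $v$ connect the blocks. Verifying that these endpoint conditions survive contraction is where most of the case analysis will lie.
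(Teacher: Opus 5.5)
Your proposal is a plan rather than a proof. The step you call the main obstacle, gluing the blocks, is where all the difficulty of the theorem lies, and you leave it open.

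\textbf{Gaps in the proposal.}
\begin{enumerate}
\item The strengthened induction hypothesis is never stated, and natural versions of it are false. The tournament of \cref{fig:flip_bipartite7} has a bipartite flip graph on $7$ arborescences, with classes of sizes $4$ and $3$. So both ends of every Hamiltonian path lie in the larger class. Hence you cannot prescribe the start arbitrarily. Nor can you demand that the path may end at either of two arborescences differing only in the parent of a leaf $v$: those two are adjacent, so they lie in opposite classes.
\item Your leaf decomposition only treats arborescences in which $v$ is a leaf. The complementary class is never handled; you switch to another partition instead.
\item In the partition by the parent of $v$, the blocks with parent $u\neq r$ are not covered by the closure you invoke, which only contracts arcs out of the root. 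Even traversing these blocks needs an argument you do not give.
\item Two blocks with parents $u$ and $u'$ are joined only at arborescences in which $u'$ is not a descendant of $v$. The inter-block edges are therefore not a matching between isomorphic pieces. Gluing would require endpoint control inside each block, and your induction never supplies it.
\end{enumerate}

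\textbf{How the paper supplies that control.} It uses a different decomposition.
\begin{enumerate}
\item The induction runs over digraphs in which the support of $G-r$ is a clique. This class is closed under both deleting and contracting arcs out of $r$.
\item It picks $u,v\in N^+(r)$ with $uv\in E$, maximising $|N^+(u)|$. It then splits the arborescences by $e=ru$, $f=rv$, $g=uv$ into $\T_{-e}$, $\T_{/e/f}$, $\T_{/e/g}$ and $\T_{-f-g/e}$.
\item Whenever $e$ is present, $u$'s parent is $r$, so $f$ and $g$ can always be exchanged. This is a perfect matching between isomorphic pieces. Thus $\T_{/e/f}\cup\T_{/e/g}$ carries a spanning ladder, namely a Hamiltonian path of $\T_{/e/f}$ times an edge.
\item By \cref{lem:ladder}, this ladder has a Hamiltonian path from any vertex to some vertex at any other prescribed level. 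This flexibility is what lets the uncontrolled inductive paths on $\T_{-e}$ and $\T_{-f-g/e}$ be attached.
\item The attachment fails only when every arborescence of $\T_{-f-g/e}$, or every one of $\T_{-e}$, flips to the same arborescence. Those degenerate graphs are classified by \cref{lem:flip_clique} and finished by a case analysis that uses the maximality of $|N^+(u)|$.
\end{enumerate}

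Without an analogue of the ladder and of this degenerate-case analysis, your argument does not close.
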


\section{Preliminaries}
\subsection{Notations and glossary}
Given a directed graph $G=(V,E)$, we denote by $u \to v$ or $uv$ an arc going from
$u$ to $v$.
We denote by
$N^+(v) = \{u \colon v \to u\}$ the
\defin{outneighbourhood} of $v$ and $\deg^+(v) = |N^+(v)|$ its
\defin{outdegree}.
Analogously, we denote by $N^-(v) = \{u \colon u \to v\}$ its \defin{inneighbourhood} and
$\deg^-(v) = |N^-(v)|$ its \defin{indegree}. A vertex of outdegree zero is a
\defin{sink}. The \defin{support} of a directed (multi)-graph $G$ is the undirected graph
on the same vertex set, with an edge between $u$ and $v$ if $G$ has at least one arc from $u$ to $v$
or vice versa. 
Given a fixed vertex $r$ of $G$ called
\defin{root}, we call \defin{arborescence} any spanning directed tree of $\vec{G}$
in which all arcs are oriented away from $r$. Two arborescences that are equal
on all arcs but one are said to differ by an \defin{arc flip}. \defin{Flipping in}
the arc $uv$ in an arborescence $\vec{T}$ corresponds to removing the unique arc
entering $v$ and replacing it by the arc $uv$, thereby obtaining another
arborescence that differs from $\vec{T}$ by an arc flip. The \defin{flip graph}
of $\vec{G}$ rooted in $r$, is the undirected graph $\FD{\vec{G}}{r}$ whose
nodes are the arborescences of $\vec{G}$ rooted in $r$, with an edge between each
pair of arborescences that differ by an arc flip.

Given a directed graph $G$ rooted in $r$ and one of its vertices $u$, denote
$D_G(u)$ the \defin{descendants} of $u$, that is the vertices $v$ such that
all paths from $r$ to $v$ pass through~$u$ ($u$ included).
Given an arborescence
$A$ of $G$, an arc $uv \in E(G) \setminus E(A)$ can be flipped in if and only if
$v$ is not a descendant of $u$ in $A$. We now prove an auxiliary lemma allowing
us to extend subarborescences into arborescences. 

\begin{lemma}\label{lem:completion}
  Let $\vec G$ be a directed graph and $r$ a vertex of $G$, such that $G$ admits at least one
  arborescence rooted in $r$. Any directed subtree $T$ of $\vec G$ rooted in $r$ can be
  completed into an arborescence.
\end{lemma}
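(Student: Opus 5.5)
We grow $T$ greedily, one arc at a time, and show the growth never gets stuck. Here $T$ is an out-tree rooted at $r$: it spans a vertex set $V(T)\ni r$, and every vertex of $V(T)\setminus\{r\}$ has exactly one entering arc of $T$. If $V(T)=V(G)$, then $T$ is already an arborescence. Otherwise we look for an arc $uv$ of $\vec G$ with $u\in V(T)$ and $v\notin V(T)$. Adding such an arc keeps $T$ a directed subtree rooted in $r$: $v$ gets a unique entering arc, and no cycle appears because $v$ was not in the tree. Each step covers one new vertex, so after at most $|V(G)|-|V(T)|$ steps we obtain an arborescence containing the original $T$.

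The only thing to check is that such an arc always exists while $V(T)\neq V(G)$. This is where we use the hypothesis that $G$ has an arborescence $A$ rooted in $r$. Take any $w\in V(G)\setminus V(T)$. There is a directed path in $A$ from $r$ to $w$. It starts in $V(T)$, since $r\in V(T)$, and ends outside $V(T)$. So some arc $uv$ on this path has $u\in V(T)$ and $v\notin V(T)$, and this arc belongs to $\vec G$. Equivalently, every vertex is reachable from $r$, so the out-cut of any proper subset containing $r$ is nonempty.

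No step here is a real obstacle; the argument is the usual search-tree extension. The one point needing care is that "directed subtree rooted in $r$" must mean a tree containing $r$ whose arcs all point away from $r$. If $T$ could avoid $r$ or have some other root, the statement would fail, so we read the definition this way. With that reading, an induction on $|V(G)\setminus V(T)|$ completes the proof.
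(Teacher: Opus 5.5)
Your proposal is correct and follows essentially the same argument as the paper. The paper also uses the given arborescence to find a path from $r$ to a vertex outside $V(T)$, takes its last arc (by minimality of the path) as the arc leaving $V(T)$, adds it, and inducts. Your phrasing of the induction on $|V(G)\setminus V(T)|$ is slightly more accurate than the paper's ``induction on $|V(T)|$''.
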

\begin{proof}
  We proceed by induction of $|V(T)|$. Let $A$ be an arborescence of $\vec G$
  and $T$ a directed subtree of $\vec {G}$, both rooted in $r$. Let $X$ be the
  set of vertices of $G$ that do not belong to $T$. Let $P$ be a minimal path
  from $r$ to a vertex in $X$, such a path exists because $A$ is an arborescence
  rooted in $r$. Let $v$ and $u$ be the last and penultimate vertices in
  $P$. We have $u \to v$ and $u$ belongs to $V(T)$ by minimality of $P$;
  therefore, adding the arc $uv$ to $T$ extends $T$. 
\end{proof}

\subsection{Hamiltonicity of ladders and hypercubes}
We start with some easy constructions of Hamiltonian paths in ladders and
hypercubes, which will be used in other constructions. A \defin{ladder} of length $n$ is the undirected graph consisting of
two paths $a_1, \dots,a_n$ and $b_1, \dots,b_n$ of length $n$, with an
additional edge between $a_i$ and $b_i$ for all $i$. A vertex in the ladder has
\defin{level} $i$ if $v \in \{a_i,b_i\}$.

\begin{lemma}\label{lem:ladder}
  Let $F$ be the ladder of length
  $n$, and
 let $i,j\le n$ be integers with $i \neq j$.
   Let $u$ be a vertex of level $i$. Then  $F$ has a Hamiltonian path
 from $u$ to some vertex at level $j$.
\end{lemma}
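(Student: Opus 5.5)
Without loss of generality $u=a_i$, since the ladder has the symmetry swapping $a_k\leftrightarrow b_k$ for all $k$. There is also the reflection $k\mapsto n+1-k$, which reverses the order of levels, so it suffices to treat $i<j$. The plan is to build the path explicitly. First sweep the levels on the side of $i$ away from $j$, and return to level $i$ on the other rail. Then zigzag through the remaining levels until level $j$ is reached. Finally cover whatever lies beyond $j$ and come back so as to end at level $j$.

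Concretely, for $i<j$ I would proceed in three stages.

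\emph{Stage one} covers levels $1,\dots,i$. Starting at $a_i$, walk $a_i,a_{i-1},\dots,a_1$, cross to $b_1$, and walk back $b_1,b_2,\dots,b_i$. This visits every vertex of levels $1,\dots,i$ and ends at $b_i$. If $i=1$ it is just $a_1,b_1$.

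\emph{Stage two} covers levels $i+1,\dots,j-1$ by a zigzag. From the current endpoint at level $k$, step to the same rail at level $k+1$ and then cross the rung. This covers each level entirely and ends on one rail at level $j-1$, or at $b_i$ if $j=i+1$.

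\emph{Stage three} covers levels $j,\dots,n$. Suppose we are at some $x_{j-1}$ with $x\in\{a,b\}$, and let $y$ be the other rail. If $j=n$, step to $x_n$ and cross to $y_n$, which ends at level $j$. If $j<n$, walk $x_j,x_{j+1},\dots,x_n$, cross to $y_n$, and walk back $y_{n-1},\dots,y_j$, which again ends at level $j$.

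All steps use ladder edges: rail edges join consecutive levels on the same rail, and rungs join $a_k$ to $b_k$. Each vertex is visited exactly once, because the three stages partition the levels into $[1,i]$, $[i+1,j-1]$ and $[j,n]$, and each stage covers both vertices of each of its levels exactly once.

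The only point needing care is the bookkeeping of which rail each stage ends on, so that the next stage starts adjacent to it. This works because every stage begins at the level right after the previous stage's last level, on the same rail as the previous endpoint. There is no real obstacle here; the lemma is elementary.
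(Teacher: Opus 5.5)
Your proof is correct and follows the paper's argument essentially verbatim: the paper also reduces to $u=a_i$ with $i<j$ and concatenates the same three pieces, namely a sweep down to level $1$ and back on the other rail, a zigzag, and a run out to level $n$ that returns on the other rail to level $j$. The only cosmetic difference is that your stage two stops at level $j-1$ whereas the paper's zigzag already enters level $j$; your rail notation $x,y$ also makes explicit the orientation of the last piece, which the paper writes out only for one of the two cases.
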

Note that we cannot prescribe \emph{which} of the two vertices at level $j$
is the endpoint of this Hamiltonian path. In fact, since $F$ is bipartite, a parity argument shows that only one of the two vertices at level $j$ can potentially
be the endpoint of a Hamiltonian path starting at~$u$.
\begin{proof}
  Without loss of generality, assume that $j>i$ and $u = a_i$. Consider the paths
  $P_1$, $P_2$ and $P_3$, where $P_1 = a_i, \dots,a_1,b_1, \dots,b_i$ and $P_3 =
  b_j, \dots,b_n, a_n, \dots,a_j$, and $P_2$ goes from $b_i$ to some vertex $v
  \in \{a_j, b_j\}$ by zigzagging (see \cref{fig:ladder}):
  $$P_2 = b_i, b_{i+1}, a_{i+1}, a_{i+2}, b_{i+2}, b_{i+3}, \dots,v$$
 Concatenating the three paths $P_1$, $P_2$, and $P_3$ results in a Hamiltonian
  cycle, ending at a vertex $w \neq v$ of level $j$. Note that $w$ depends on
  the parity of $j-i$. 
\end{proof}
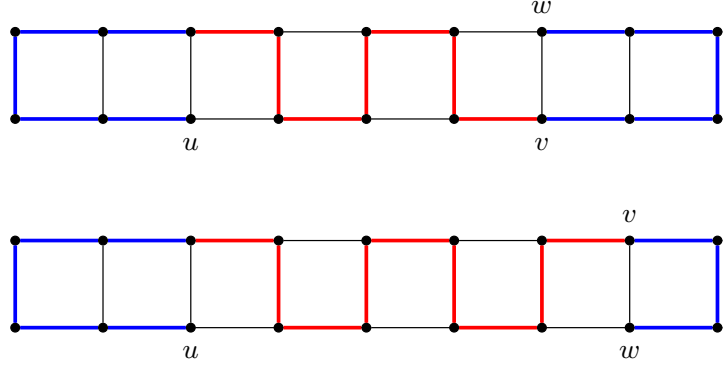
\begin{figure}[h!]
  \begin{subfigure}[t]{\textwidth}
    \centering
    \begin{tikzpicture}
      \foreach \i in {1,...,9}{
        \node[inner sep=1pt,circle, draw=black,fill=black] (x\i) at (\i,0) {};
        \node[inner sep=1pt,circle, draw=black,fill=black] (y\i) at (\i,1) {};
        \draw (x\i) -- (y\i);
      }

      \foreach \i in {1,...,8}{
        \pgfmathsetmacro{\j}{\i+1}
        \draw (x\i) -- (x\j) (y\i) -- (y\j);
      }

      \draw[very thick, blue] (x3) -- (x2) -- (x1) -- (y1) -- (y2) -- (y3);
      \draw[very thick, red] (y3) -- (y4) -- (x4) -- (x5) -- (y5) -- (y6) -- (x6) --
      (x7);
      \draw[very thick, blue] (x7) -- (x8) -- (x9) -- (y9) -- (y8) -- (y7);

      \node[below=1pt of x3] {\small $u$};
      \node[below=1pt of x7] {\small $v$};
      \node[above=1pt of y7] {\small $w$};
    \end{tikzpicture}
  \end{subfigure}\\
  
  \begin{subfigure}[t]{\textwidth}
    \centering
    \begin{tikzpicture}
      \foreach \i in {1,...,9}{
        \node[inner sep=1pt,circle, draw=black,fill=black] (x\i) at (\i,0) {};
        \node[inner sep=1pt,circle, draw=black,fill=black] (y\i) at (\i,1) {};
        \draw (x\i) -- (y\i);
      }

      \foreach \i in {1,...,8}{
        \pgfmathsetmacro{\j}{\i+1}
        \draw (x\i) -- (x\j) (y\i) -- (y\j);
      }

      \draw[very thick,blue] (x3) -- (x2) -- (x1) -- (y1) -- (y2) -- (y3);
      \draw[very thick,red] (y3) -- (y4) -- (x4) -- (x5) -- (y5) -- (y6) -- (x6)
      -- (x7) -- (y7) -- (y8);
      \draw[very thick,blue] (y8) -- (y9) -- (x9) -- (x8);

      \node[below=1pt of x3] {\small $u$};
      \node[below=1pt of x8] {\small $w$};
      \node[above=1pt of y8] {\small $v$};

    \end{tikzpicture}
  \end{subfigure}
  \caption{Two Hamiltonian paths in a ladder with extremities at different
    levels. The paths $P_1$ and $P_3$ are drawn in blue, the path $P_2$ in red.}
  \label{fig:ladder}
\end{figure}

The \defin{hypercube $K_2^d$} of dimension $d$ is the graph on $2^d$ vertices indexed by
$\{0,1\}^d$ in which two vertices are adjacent if they differ on exactly one
coordinate. Hypercubes of dimension at least 2 are edge-Hamiltonian:

\begin{lemma}\label{lem:hypercube}
  Let $K_2^d$ be the hypercube of dimension $d \ge 2$ and $uv \in E(G)$. The
  hypercube $K_2^d$ has a Hamiltonian cycle containing the edge $uv$.
  \qed
\end{lemma}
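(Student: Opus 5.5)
Induction on $d$ is the natural route: for each dimension $d\ge 2$ and each edge $uv$ of $K_2^d$, we construct a Hamiltonian cycle of $K_2^d$ that contains $uv$.

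\textbf{Base case.} For $d=2$ the hypercube $K_2^2$ is itself a $4$-cycle. This cycle contains every edge, so there is nothing to prove.

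\textbf{Inductive step.} Let $d\ge 3$ and let $uv$ be an edge. Since $K_2^d$ is edge-transitive (coordinate permutations and translations by XOR act transitively on edges), we may assume that $u$ and $v$ differ in some coordinate $k$. We will take care to split along a different coordinate. Pick a coordinate $c\neq k$, which exists because $d\ge 3$. Splitting along $c$ gives two copies $Q_0$ and $Q_1$ of $K_2^{d-1}$, both of dimension at least $2$, and the edge $uv$ lies entirely in one of them, say $Q_0$.

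\textbf{Construction.} By induction, $Q_0$ has a Hamiltonian cycle $C_0$ through $uv$. Choose an edge $xy$ of $C_0$ different from $uv$; such an edge exists since $C_0$ has $2^{d-1}\ge 4$ edges. Let $x'$ and $y'$ be the neighbours of $x$ and $y$ in $Q_1$, that is, the vertices obtained by flipping coordinate $c$. Then $x'y'$ is an edge of $Q_1$, and we apply induction again to get a Hamiltonian cycle $C_1$ of $Q_1$ through $x'y'$. Now remove $xy$ from $C_0$ and $x'y'$ from $C_1$, and add the two cross edges $xx'$ and $yy'$. The result is a single cycle visiting all $2^d$ vertices, and it still contains $uv$ because we kept $uv$ in $C_0$.

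\textbf{Main obstacle.} The only delicate point is making sure the prescribed edge $uv$ is not destroyed. Two things could go wrong. First, $uv$ could be a cross edge between $Q_0$ and $Q_1$; choosing $c\neq k$ rules this out, and this is why we need $d\ge 3$. Second, $uv$ could be the edge removed during the merge; choosing $xy\neq uv$ avoids this. Everything else is the standard reflected-Gray-code gluing.
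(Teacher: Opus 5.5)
Your proof is correct, but it takes a different route from the paper. The paper gives no real proof. It only remarks that $K_2^d$ has a Hamiltonian cycle (for instance the binary reflected Gray code) and that $K_2^d$ is edge-transitive, so an automorphism sending some edge of that cycle to $uv$ carries it to a Hamiltonian cycle through $uv$.

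You instead prove edge-Hamiltonicity directly by induction. You split along a coordinate $c$ other than the direction of $uv$, so that $uv$ stays inside one half. You then merge Hamiltonian cycles of the two halves by exchanging a parallel pair $xy$, $x'y'$ for the rungs $xx'$, $yy'$. The paper's argument is a one-liner but rests on the symmetry of the cube. Yours uses no automorphisms at all. With the small addition of treating a rung edge $uu'$ by choosing $x=u$, the same gluing shows more generally that $G\square K_2$ is edge-Hamiltonian whenever $G$ is.

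Two minor points. First, your appeal to edge-transitivity at the start of the inductive step does no work: every edge of $K_2^d$ joins vertices differing in exactly one coordinate, by definition. If you are willing to use edge-transitivity, the induction becomes unnecessary, and that is precisely the paper's argument. Second, the actual reason the inductive step needs $d\ge 3$ is that the halves must have dimension at least $2$ for the induction hypothesis to give cycles. It is not about the existence of $c\ne k$, which already holds for $d=2$.
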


Listing binary strings was the original problem considered by Gray~\cite{gray1953Pulse},
and thus, many different proofs of the Hamiltonicity of the hypercube exist, such as the binary reflected Gray code. From this, one can deduce that the hypercube is edge-Hamiltonian simply by the edge-transitivity of the hypercube.

\subsection{Reductions}

We now prove several reductions rules that preserve the flip graph of a directed
graph, or at least the existence of a Hamiltonian path in it. The operations we
consider are the removal, the subdivision, the contraction and the duplication of an arc.

\paragraph{Arc deletion.}
Given a fixed directed graph $G$, we denote $G-uv$ the directed
graph obtained by removing the arc $u \to v$.

\begin{observation}\label{obs:arc_deletion}
  Let $uv$ be an arc of a directed graph $G$ rooted in $r$. The flip graph of
  $G-uv$ is isomorphic to the subgraph of $\FD{\vec G}r$ induced by the
  arborescences not containing $uv$. A direct consequence of this is that every
  arc that appears in none of the arborescences of $G$ can be removed from $G$
  without affecting its flip graph.
\end{observation}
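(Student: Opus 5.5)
The observation has two parts: an isomorphism of flip graphs, and a consequence for arcs used by no arborescence. I will prove both directly from the definitions.

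For the first part, the natural map sends an arborescence $A$ of $G-uv$ rooted in $r$ to $A$ itself, viewed as a subgraph of $G$. Since $E(G-uv)\subseteq E(G)$ and $V(G-uv)=V(G)$, $A$ is a spanning directed tree of $G$ with all arcs oriented away from $r$, so it is an arborescence of $G$, and it avoids $uv$. Conversely, an arborescence of $G$ not containing $uv$ has all its arcs in $E(G)\setminus\{uv\}$, so it is an arborescence of $G-uv$. The map is therefore a bijection between the nodes of $\FD{(G-uv)}{r}$ and the arborescences of $G$ avoiding $uv$.

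Next I check that this bijection preserves adjacency. Two arborescences are adjacent in either flip graph exactly when they differ by an arc flip, that is, when their arc sets differ in exactly one arc each. This condition depends only on the arc sets of the two arborescences and not on the ambient graph. Hence two arborescences avoiding $uv$ are adjacent in $\FD{(G-uv)}{r}$ if and only if they are adjacent in $\FD{\vec G}{r}$. So the image is exactly the subgraph of $\FD{\vec G}{r}$ induced by the arborescences avoiding $uv$, which gives the claimed isomorphism.

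For the consequence, suppose $uv$ lies in no arborescence of $G$. Then the set of arborescences avoiding $uv$ is the whole vertex set of $\FD{\vec G}{r}$, so the induced subgraph is the full flip graph, and $\FD{(G-uv)}{r}\cong\FD{\vec G}{r}$. Iterating this argument removes all such arcs. The removal does not create new arborescences and does not destroy any, since every arborescence of $G$ is still present. The only point needing care is that adjacency is intrinsic to the arc sets, and that is immediate from the definition of an arc flip, so I expect no real obstacle.
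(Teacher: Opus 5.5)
Your proof is correct and is essentially the paper's approach: the paper states this as an observation without proof, treating it as immediate from the definitions, and your two points (the arborescences of $G-uv$ are exactly the arborescences of $G$ avoiding $uv$, and adjacency in the flip graph depends only on the arc sets of the two arborescences) are precisely that unpacking. Your remark that deleting an arc used by no arborescence leaves the set of arborescences unchanged, so such deletions can be iterated, correctly covers the one point that goes beyond the single-arc case.
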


Let $G$ be a directed graph $G$ rooted in some vertex $r$.
We will say that a directed graph $H$ is \defin{built on}
$G$ if $V(H) = V(G)$ and $E(G) \subseteq E(H)$, and 
for every arc $u\to v$ in $E(H) \setminus E(G)$,
$u$ is a descendant of $v$ in $G$. We will call such an arc $u \to v$ a
\defin{backedge} of $H$.

\begin{lemma}\label{lem:built_on}
  If $H$ is built on $G$, then $\FD{\vec H}r$ is isomorphic to $\FD{\vec G}r$.
\end{lemma}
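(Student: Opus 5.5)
The plan is to show that no arborescence of $H$ uses a backedge. Then the arborescences of $H$ and of $G$ are the same sets of arcs. Two arborescences differ by an arc flip in $H$ exactly when they do in $G$, since the flip relation depends only on the two arc sets. So the identity map will be an isomorphism $\FD{\vec H}r \to \FD{\vec G}r$. By \cref{obs:arc_deletion}, applied once for each arc of $E(H)\setminus E(G)$, it is enough to show that every backedge appears in none of the arborescences of $H$.

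So fix a backedge $u\to v$, meaning $u \in D_G(v)$, and suppose some arborescence $A$ of $H$ contains $uv$. The key claim is that the descendant relation does not change when backedges are added: $D_H(v) \supseteq D_G(v)$. Equivalently, every path from $r$ to $u$ in $H$ passes through $v$. Given this, the path from $r$ to $u$ in $A$ passes through $v$, so $u$ is a descendant of $v$ in $A$. Appending the arc $uv$ would then close a directed cycle through $v$, contradicting that $A$ is a tree. (If $u=v$, the arc is a loop, which is excluded trivially.)

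To prove the claim, I would show a slightly stronger statement. For every vertex $x$, let $S(x)$ be the set of vertices that dominate $x$ in $G$, that is, the vertices lying on every $r$--$x$ path in $G$. I claim every $r$--$x$ path in $H$ meets all of $S(x)$. I would argue by contradiction with a shortest counterexample path $P$ in $H$, counting backedges used. If $P$ uses no backedge, it is a path in $G$ and we are done. Otherwise, look at the last backedge $a\to b$ on $P$, where $a\in D_G(b)$. The part of $P$ after $b$ is a $G$-path from $b$ to $x$. The part of $P$ up to $a$ has fewer backedges, so by minimality it meets all of $S(a)$, and $S(a)\supseteq S(b)\cup\{b\}$. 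For any $w\in S(x)$, either $w$ lies on the $G$-segment from $b$ to $x$, or every $G$-path from $b$ to $x$ avoids $w$. In the second case, $w$ must dominate $b$: otherwise an $r$--$b$ path in $G$ avoiding $w$, joined to the $b$--$x$ segment, would avoid $w$, since the segment itself avoids $w$. So $w\in S(b)\subseteq S(a)$, which the prefix of $P$ meets. In both cases $P$ meets $w$, a contradiction. The claim follows with $x=u$ and $w=v$.

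The main obstacle is this dominator-stability argument, specifically handling the concatenation carefully so that walks can be treated as paths. I would work with walks throughout, since domination statements are insensitive to repeated vertices, and induct on the number of backedge occurrences in a walk.
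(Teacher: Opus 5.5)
Your proof is correct, but it reaches the key fact (no backedge lies in any arborescence of $H$) by a heavier route than the paper.

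\textbf{The paper's route.} It makes an extremal choice inside the arborescence. If some arborescence $A$ of $H$ contains a backedge, pick the one closest to the root, i.e.\ a backedge $uv$ that is the only arc of $E(A)\setminus E(G)$ on the tree path from $r$ to $v$. The part of that tree path from $r$ to $u$ then uses only arcs of $G$ and avoids $v$. This directly contradicts $u\in D_G(v)$, and no statement about arbitrary paths of $H$ is needed.

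\textbf{Your route.} You first prove a general dominator-stability statement: every $r$--$x$ path in $H$ meets every $G$-dominator of $x$. You do this by induction on the number of backedges along the path, peeling off the last backedge $a\to b$ and using $S(b)\cup\{b\}\subseteq S(a)$. Only then do you apply it to the tree path to $u$. The induction is sound, including the degenerate cases where vertices are unreachable in $G$, which are handled vacuously.

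\textbf{Comparison.} Your approach buys a stronger structural conclusion. Adding arcs can only shrink descendant sets, so combined with your claim you get $D_H(v)=D_G(v)$ for every $v$: adding backedges leaves the whole dominator structure unchanged. The paper's approach buys brevity. Choosing the first backedge on a root path makes your induction unnecessary, because the relevant path contains no backedge at all. In effect, your ``last backedge on an arbitrary path'' induction and the paper's ``first backedge on a tree path'' choice are two sides of the same minimality idea.

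\textbf{Wording slip.} Your case split should read ``either $w$ lies on the segment from $b$ to $x$, or that segment avoids $w$.'' As written, ``every $G$-path from $b$ to $x$ avoids $w$'' is not a valid dichotomy. Your subsequent reasoning only uses the former, so the argument is unaffected.
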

\begin{proof}
  The arcs $uv$ of $E(H) \setminus E(G)$ do not appear in any arborescence of
  $H$. Otherwise, let $A$ be an arborescence of $H$ with
  $uv \in E(A)$. Without loss of generality, assume that $uv$ was
  chosen such that $uv$ is the only arc of $E(A) \setminus E(G)$ on the path
  from the root $r$ to $v$. Then there exists a path from $r$ to $u$ that uses
  arcs of $G$ and avoids $v$, which contradicts the fact that $u$ is a
  descendant of $v$ in $G$. Thus the arborescences of $H$ are exactly the
  arborescences of $G$, and the flip graphs of $H$ and $G$ are isomorphic by \cref{obs:arc_deletion}.
\end{proof}

\paragraph{Arc contraction.}
Denote $G/uv$ the directed graph obtained by contracting the arc $u \to v$,
that is replacing $u$ and $v$ by a vertex $w$ with
$N^-(w) = N^-(u) \cup N^-(v) \setminus \{u,v\}$ and
$N^+(w) = N^+(u) \cup N^+(v) \setminus \{u,v\}$. Arc contractions do not behave
as well as edge deletions with respect to the flip graph. Indeed, the flip graph
can change when contracting an arc, even if this arc belongs to all the
arborescences of $G$. For example, the graph $G$ on three vertices $r$, $u$ and
$v$, rooted in $r$ and containing the arcs $ru$, $uv$ and $rv$, has two
arborescences: one containing $ru$ and $uv$, the other containing $ru$ and
$rv$. However contracting $ru$ results in a single arc, hence the number of
arborescences of $G$ was not preserved. Recall also from the introduction that
another problem might occur. The reconfiguration sequences on the contracted
graph do not correspond to reconfiguration sequences in the original graph. For
example, consider the graph $G$ on four vertices $r$, $x$, $y$, $z$, rooted in
$r$, with four arcs $r \to x$, $r\to y$, $x \to z$ and $y\to z$ (see
\cref{fig:contraction_counterexamplea}). When contracting $xz$ into a single
vertex $u$, the arc $ru$ can be flipped to $yu$ (see
\cref{fig:contraction_counterexampleb}). However, $r$ and $y$ have disjoint
outneighbourhoods in $G$ so this flip does not correspond to a flip in the
original graph.
\begin{figure}[h!]
  \centering
  \begin{subfigure}{.45\textwidth}
    \centering
    \begin{tikzpicture}[decoration={
        markings,
        mark=at position 0.6 with {\arrow[scale=1.5]{>}}}
      ]
      \node[vertex,double] (r) at (0,2) {};
      \node[vertex] (x) at (-1,1) {};
      \node[vertex] (y) at (1,1) {};
      \node[vertex] (z) at (0,0) {};

      \draw[postaction={decorate}]  (r) to (x);
      \draw[postaction={decorate}]  (r) to (y);
      \draw[postaction={decorate}]  (x) to (z);
      \draw[postaction={decorate}]  (y) to (z);

      \node[above=0cm] at (r) {$r$};
      \node[left=0cm] at (x) {$x$};
      \node[right=0cm] at (y) {$y$};
      \node[below=0cm] at (z) {$z$};
    \end{tikzpicture}
    \caption{$G$}\label{fig:contraction_counterexamplea}
  \end{subfigure}
  \begin{subfigure}{.45\textwidth}
    \centering
    \begin{tikzpicture}[decoration={
        markings,
        mark=at position 0.6 with {\arrow[scale=1.5]{>}}}
      ]

      \node[vertex,double] (r) at (0,2) {};
      \node[vertex] (u) at (-.5,.5) {};
      \node[vertex] (y) at (1,1) {};

      \draw[postaction={decorate}]  (r) to (u);
      \draw[postaction={decorate}]  (r) to (y);
      \draw[postaction={decorate}]  (y) to (u);

      \node[above=0cm] at (r) {$r$};
      \node[right=0cm] at (y) {$y$};
      \node[below left=0cm] at (u) {$u$};
    \end{tikzpicture}
    \caption{$G/xz$}\label{fig:contraction_counterexampleb}
  \end{subfigure}
  \caption{A directed graph $G$ for which contracting $xz$ into $u$ creates a flip unfeasible in $G$. Throughout this article, we will consistently represent the root of our directed graphs by a circled vertex, here $r$.}
  \label{fig:contraction_counterexample}
\end{figure}
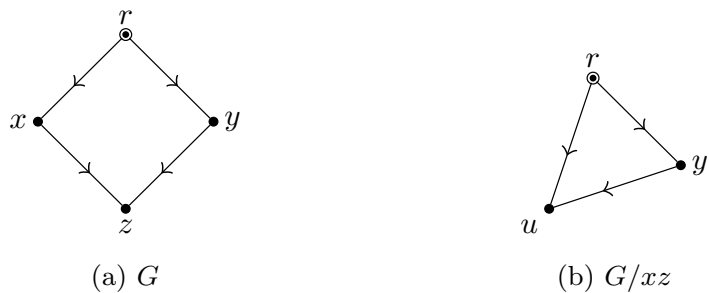

However, some contractions still preserve the existence of a Hamiltonian path.
Let $\vec{G}$ be a directed graph, $r$ its root and $rx$ an outgoing arc of
$r$. Let $\vec G'= \vec G/rx$ be the directed graph obtained after the
contraction of $rx$ into the new root $r'$. Let $\T_{/rx}$ denote the set of
arborescences of $\vec{G}$ that contain the arc $rx$. 

\begin{lemma}\label{lem:arc_contraction}
  Let $A'$ be an arborescence of $\vec G'$. If $\FD{\vec{G}'}{r'}$ contains a
  Hamiltonian path starting from $A'$, then $\FD{\vec{G}}{r}[\T_{/rx}]$ contains
  a Hamiltonian path starting from any arborescence $A$ such that 
  contracting $rx$ results in $A'$.
\end{lemma}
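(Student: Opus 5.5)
The plan is to describe the arborescences in $\T_{/rx}$ as a union of hypercube-shaped fibres over the arborescences of $\vec G'$, and then lift the given Hamiltonian path of $\FD{\vec G'}{r'}$ fibre by fibre.

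\textbf{Step 1: the contraction map and its fibres.} Let $A\in\T_{/rx}$. Contracting $rx$ gives an arborescence $A/rx$ of $\vec G'$ rooted in $r'$. Its arcs are the arcs of $A-rx$, where every arc $rv$ or $xv$ is replaced by $r'v$.
\begin{itemize}
\item Conversely, fix an arborescence $A'$ of $\vec G'$. Let $C(A')$ be the set of children of $r'$ in $A'$.
\item For $v\in C(A')$, the arc $r'v$ of $\vec G'$ comes from $rv$, from $xv$, or from both. Let $S(A')\subseteq C(A')$ be the set of $v$ for which both $rv,xv\in E(\vec G)$.
\item Any choice of the preimage of each $r'v$, together with $rx$ and the other arcs of $A'$ unchanged, gives an arborescence of $\vec G$ containing $rx$.
\end{itemize}
Indeed, $x$ is a child of $r$, so every vertex remains reachable from $r$, and the arc count is right. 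Hence the fibre of $A'$ is in bijection with $\{0,1\}^{S(A')}$.

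Two elements of the same fibre that differ in one coordinate differ by a single arc flip ($rv\leftrightarrow xv$). So the fibre induces a graph containing the hypercube $K_2^{|S(A')|}$ (a single vertex if $S(A')=\emptyset$).

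\textbf{Step 2: lifting flips.} Let $A'_1A'_2$ be an edge of $\FD{\vec G'}{r'}$, changing the parent of some vertex $v$ from $p_1$ to $p_2$. The claim is that every $A$ in the fibre of $A'_1$ has a neighbour in the fibre of $A'_2$. To build it, keep all arcs of $A$ except the one entering $v$, and replace that arc as follows:
\begin{itemize}
\item if $p_2\neq r'$, use the arc $p_2v$;
\item if $p_2=r'$, use either of $rv$ or $xv$ that exists in $\vec G$.
\end{itemize}
This choice is consistent with the fibre structure, since only the coordinate of $v$ can change. The result is an arborescence because it contracts to $A'_2$ and contains $rx$, by the converse in Step 1. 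It differs from $A$ in exactly one arc.

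\textbf{Step 3: concatenation.} Let $A'=A'_1,\dots,A'_N$ be the Hamiltonian path of $\FD{\vec G'}{r'}$, and let $A$ be any preimage of $A'$. Proceed inductively:
\begin{itemize}
\item Traverse the fibre of $A'_1$ by a Hamiltonian path of its hypercube starting at $A$. Such a path exists from any start vertex: translate the binary reflected Gray code by XOR. This also covers dimension $0$ trivially.
\item Let $E_1$ be the endpoint. By Step 2, $E_1$ has a neighbour $B_2$ in the fibre of $A'_2$.
\item Traverse that fibre by a Hamiltonian path starting at $B_2$, and continue in the same way.
\end{itemize}
The fibres partition $\T_{/rx}$, so the concatenation is a Hamiltonian path of $\FD{\vec G}{r}[\T_{/rx}]$ starting at $A$.

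\textbf{Main obstacle.} The delicate point is the bijection in Step 1, in particular that validity of flips is preserved under contraction. The contraction definition uses neighbourhoods, so there are no multi-arcs, and arcs entering $r$ or $x$ disappear. One must check that the lifted flip in Step 2 never creates a cycle. This holds because the lifted graph contracts to the arborescence $A'_2$, and every vertex other than $r$ and $x$ keeps exactly one in-arc.
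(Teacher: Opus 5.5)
Your proof is correct and follows essentially the same route as the paper. Both arguments show that the fibre over each arborescence of $\vec G'$ induces a hypercube (your $S(A')$ is the paper's $M_{T'}$), and both lift each edge of the Hamiltonian path of $\FD{\vec G'}{r'}$ by traversing the current fibre from the current arborescence and then making one flip into the next fibre. The only difference is how the lifted flip is certified: you note that any choice of preimages together with $rx$ is an arborescence, whereas the paper checks directly that the new tail $a$ is not a descendant of $b'$; both justifications work.
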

\begin{proof}
  Let $\phi$ be the map that associates to any arc $uv$ of $\vec{G}$ the arc $r'v$ if $u \in \{r,x\}$ and the arc $uv$ otherwise. For every arborescence $T$ rooted in $r$ and containing the arc $rx$, we have $E(T/rx) = \phi(E(T))$.

  Let $T'$ be an arborescence of $\vec{G}'$. Let $M_{T'}$ be the set of vertices in
  $N^+_G(x) \cap N^+_G(r)$ that are outneighbours of $r'$ in $T'$, and let $H_{T'}$ be the set of all arborescences $T$ of $G$ rooted in $r$ that contain the arc $rx$ and such that $\phi(T) = T/rx = T'$. We first prove that $H_{T'}$ induces a hypercube of dimension $|M_{T'}|$ in $\FD{\vec{G}}{r}$. For every arc $r'z \in E(T')$ with $z \in M_{T'}$, the arcs mapped by $\phi$ to $r'z$ are $rz$ and $xz$. For any other arc in $T'$, there is a unique arc
  in $\vec{G}$ mapped to it by $\phi$. Hence, $H_{T'}$ is in one-to-one
  correspondence with the subsets of $M_{T'}$. Moreover, two arborescences
  $T_0$ and $T_1$ in $H_{T'}$ differ by a flip if only if there exists some
  $z \in M_{T'}$ such that $rz \in T_i$ and $xz \in T_{1-i}$, and $T_0 -z = T_1
  -z$. Therefore, $H_{T'}$ induces in $\FD{\vec{G}}{r}$ a subgraph isomorphic to the Hasse diagram of the
  poset $(M_{T'}, \subset)$, i.e. a hypercube of dimension $|M_{T'}|$.

  We now show that for every $S', T'$ adjacent in $\FD{\vec{G}'}{r'}$, for every 
  arborescence $S$ in $H_{S'}$, there is a
  path in $\FD{\vec{G}}{r}[\T_{/rx}]$ starting from $S$ that visits exactly $H_{S'}$ before ending at some
  arborescence in $H_{T'}$. The lemma then follows from applying this
  iteratively on the edges of the Hamiltonian path of $\FD{\vec{G}'}{r'}$. Let
  $S \in H_{S'}$. As $H_{S'}$ induces a hypercube of
  dimension $|M_{S'}|$ in $\FD{\vec{G}}{r}$, it contains a Hamiltonian path starting at $S$ and ending at
  some $S_2 \in H_{S'}$. Let $a'b'$ be the arc flipped in in $S'$ to
  obtain $T'$. Note that $b'$ cannot be equal to $r'$, because $r'$ is the root
  of $\vec{G}'$, so $b'$ is also a vertex of $\vec G$. If
  $a' = r'$, then by definition of contraction, there exists $a \in \{r,x\}$
  such that $b' \in N^+_{\vec{G}}(a)$. Otherwise, $a'$ was not the result of the
  contraction and we set $a := a'$. In both cases, the path from $r'$ to
  $a'$ in $S'$ avoids $b'$, because the arc $a'b'$ could be flipped
  in to obtain $T'$, so there is also a path from $r$ to $a$ avoiding $b'$ in $S_2$ because
  $b' \notin \{r,x\}$. Thus, $a$ is not a descendant of $b'$ in $S_2$ and the arc
  $ab'$ can be flipped in in $S_2$. Denote $T$ the resulting arborescence. As
  $\phi(ab') = a'b'$, we have $T/rx = T'$, which concludes the proof.
\end{proof}

\paragraph{Arc subdivision.}
The following observation shows that we can restrict \cref{prob:pivot_directed}
to oriented graphs by subdiving one arc in each bigon of some directed graph.
\begin{observation}
  Subdividing an arc does not modify the flip graph.
\end{observation}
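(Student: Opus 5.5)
The plan is to exhibit an explicit isomorphism between $\FD{\vec G}{r}$ and $\FD{\vec H}{r}$. Here $\vec H$ is obtained from $\vec G$ by subdividing an arc $uv$ into $u\to w\to v$, where $w$ is a new vertex and the arc $uv$ is removed. (If $u$ were the only in-neighbour of $v$, nothing would change either; the argument below does not need this distinction.)

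In $\vec H$, the vertex $w$ has a unique in-arc $uw$. Since $w$ must be reached from $r$, every arborescence of $\vec H$ contains $uw$. The only remaining freedom concerns the out-arc $wv$: it is in the arborescence exactly when the in-arc of $v$ is $wv$ rather than some other arc $zv$.

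Define $\psi$ from arborescences of $\vec H$ to arborescences of $\vec G$ as follows. Delete $w$ together with the arc $uw$. If $wv$ was present, replace it by $uv$; otherwise $w$ was a leaf and we simply drop it. The inverse map is the natural one: given an arborescence of $\vec G$, add $uw$, and if $uv$ is present, replace it by the path $u\to w\to v$.

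I will check that both maps send arborescences to arborescences. The underlying argument is that reachability from $r$ and acyclicity are preserved, because the path $u\to w\to v$ plays exactly the role of the arc $uv$. Consequently $\psi$ is a bijection.

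Next, I will verify that $\psi$ preserves adjacency. Each arborescence is determined by its choice of in-arc at every non-root vertex, and $w$'s in-arc is fixed. Two arborescences of $\vec H$ therefore differ by a flip exactly when they differ in the in-arc of a single vertex $y\neq w$. Under $\psi$, the in-arc choices at every vertex other than $v$ are unchanged. At $v$, the choice $wv$ corresponds to $uv$, and every other choice corresponds to itself. It follows that $\psi$ preserves the set of vertices whose in-arc differs, and hence preserves flips in both directions.

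The only point needing care is that "differ by a flip" means "differ in exactly one arc". This is equivalent to differing in exactly one in-arc choice, because every arborescence has exactly one in-arc per non-root vertex. The arc $uw$ is common to all arborescences of $\vec H$, so it never contributes to a difference. Beyond this, I do not expect a real obstacle, since the statement is essentially definitional.
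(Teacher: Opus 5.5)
Your proof is correct and follows the paper's argument. It uses the same bijection: the forced arc into the new vertex is kept, and the original arc is subdivided exactly when the arborescence uses it. Adjacency is preserved in both directions because the new vertex has indegree one, so its in-arc never contributes to a difference. Your per-vertex in-arc bookkeeping is a slightly more explicit version of the paper's Hamming-distance argument.
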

\begin{proof}
  Let $G$ be a directed graph rooted in $r$, let $G'$ be the directed graph
  obtained by subdividing an arc $uw$, and denote $v$ the vertex introduced in the
  operation. We build a bijection $\phi$ from the arborescences of $G$ to the
  arborescences of $G'$. Given an arborescence $A$ containing $uw$, let
  $\phi(A)$ be the arborescence of $G'$ obtained by subdividing $uw$ by
  introducing the vertex $v$. Given an arborescence $A$ that does not contain
  $uw$, let $E(\phi(A)) = E(A) \cup \{uv\}$. It is clear that $\phi$ is a bijection
  and that $\phi$ preserves the Hamming distance, i.e. $\dist_{\FD{G'}r}(\phi(A_1),\phi(A_2)) \le \dist_{\FD{G}r}(A_1,A_2)$ for every arborescences $A_1$ and $A_2$. Moreover, note that all
  arborescences of $G'$ contain the arc $uv$ because $v$ has indegree one, thus
  $\phi^{-1}$ also preserves the Hamming distance, which proves that $\FD{G}r$
  and $\FD{G'}r$ are isomorphic.
\end{proof}

\paragraph{Arc duplication.}
The following lemma shows that generalising \cref{prob:pivot_directed} to directed multi-graphs does not increase its difficulty.
\begin{lemma}\label{lem:arc_multiplication}
    Let $G$ be a directed multi-graph rooted in $r$. Let $e$ be an arc of $G$ and let $G'$ be the directed multi-graph obtained from $G$ by duplicating $e$, i.e. adding an arc $e'$ with same head and tail as $e$. If $\FD{G}{r}$ admits a Hamiltonian path (or cycle), then so does $\FD{G'}r$.
\end{lemma}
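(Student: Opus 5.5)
Let $e=uv$ and $e'$ its parallel copy. Each arborescence $A$ of $G'$ contains at most one of $e,e'$, since $v$ has a single entering arc. Define the projection $\pi$ from arborescences of $G'$ to those of $G$ that replaces $e'$ by $e$ and leaves everything else unchanged. Every arborescence of $G$ not containing $e$ has exactly one preimage. Every arborescence $A$ of $G$ containing $e$ has exactly two preimages, $A$ and $A^{e'} := A-e+e'$, and these are adjacent in $\FD{G'}r$: they differ only in the arc entering $v$, which is a flip. Flips also lift. If $A,B$ are adjacent in $\FD{G}r$ and neither contains $e$, their unique preimages are adjacent. If $A$ contains $e$ and $B$ does not, then $B$ is obtained from $A$ by replacing $e$ with another arc entering $v$, or by changing an arc entering some other vertex while $e$ stays. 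In both cases each of $A$ and $A^{e'}$ is adjacent to the corresponding preimage(s) of $B$, since the flip does not interact with the choice between $e$ and $e'$. Similarly, if both $A$ and $B$ contain $e$, then $A\sim B$ and $A^{e'}\sim B^{e'}$. So $\FD{G'}r$ is obtained from $\FD{G}r$ by ``blowing up'' each node containing $e$ into an edge $\{A,A^{e'}\}$, with both copies inheriting all the adjacencies of $A$.

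Given a Hamiltonian path $A_1,\dots,A_N$ of $\FD{G}r$, I would lift it greedily along the sequence. Whenever $A_i$ contains $e$, I replace $A_i$ by the two consecutive nodes $A_i, A_i^{e'}$ in some order. The only thing to check is the transition between consecutive blocks. If $A_i$ and $A_{i+1}$ both contain $e$, I need the last node of block $i$ to be adjacent to the first node of block $i+1$. Choosing the orders alternately works: if block $i$ ends with the $e'$-version, block $i+1$ starts with the $e'$-version, using $A_i^{e'}\sim A_{i+1}^{e'}$. Block $i+1$ then ends with the $e$-version, and so on. If only one of the two contains $e$, any endpoint of the block is adjacent to the single preimage of the other, as shown above. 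This gives a Hamiltonian path of $\FD{G'}r$.

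For a Hamiltonian cycle, the same construction gives a closed walk, and only the wrap-around transition from $A_N$ to $A_1$ needs care. This is the main obstacle: if the cycle contains a run of consecutive arborescences containing $e$ that wraps all the way around, the alternation could produce a mismatch. The fix is to exploit freedom in the ordering. Within a maximal run of consecutive blocks containing $e$, the alternation is forced only once the first block's order is chosen. The run is bounded on both sides by nodes without $e$, to which both copies are adjacent, so any choice works. The remaining case is that every arborescence contains $e$. Then $\FD{G'}r$ is the Cartesian product of $\FD{G}r$ with an edge $K_2$, and the prism over a Hamiltonian cycle is Hamiltonian: traverse the cycle on the $e$-layer minus one edge, cross to the $e'$-layer, and return. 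So Hamiltonian cycles also lift, completing the proof.
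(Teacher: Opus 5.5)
Your proof is correct and is essentially the paper's argument: each arborescence containing $e$ is replaced by the adjacent pair $A,A^{e'}$, using the triangle $\{A,A^{e'},B\}$ at mixed transitions and the 4-cycle $A,A^{e'},B^{e'},B$ when both contain $e$ (your alternation, which is exactly the pattern in the paper's figure). Your explicit handling of the wrap-around for cycles, including the prism argument when every arborescence contains $e$, covers a case the paper's one-line ``(or cycle)'' does not address: for an odd cycle the local replacement does not close up. Two details need fixing: the phrase that both copies ``inherit all the adjacencies of $A$'' is too strong, since $A^{e'}\not\sim B$ when $A\neq B$ both contain $e$ (as your own Cartesian-product description shows), and the second alternative in your mixed case is vacuous.
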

\begin{proof}
    For every arborescence $A$ that contains $e$, denote by $A'$ the arborescence obtained from $A$ by flipping in $e'$. Let $A$ and $B$ be two arborescences adjacent in $\FD{\vec G}r$ by flipping in an arc $f$.
If both $A$ and $B$ contain $e$, then $\{A,A',B,B'\}$ induces in $\FD{\vec G'}r$ a four-cycle: $A$ and $A'$ (resp. $B$ and $B'$) are adjacent by flipping $e$ into $e'$ and $A'$ is adjacent to $B'$ by flipping in the arc~$f$.
If only $A$ contains $e$, then $f = e$, so $\{A,A',B\}$ induces a triangle in $\FD{\vec G'}r$ because if $e$ can be flipped in in $B$, then $e'$ can also be flipped in. By replacing the edges of the Hamiltonian path (or cycle) $P = (A_1, \dots A_p)$ of $\FD{\vec G}r$ by 
appropriate paths through triangles or 4-cycles depending on whether $A_i$ and $A_{i+1}$ contain~$e$, one can build a Hamiltonian path $P'$ (or cycle) of $\FD{\vec G'}r$ such that,
whenever $P$ visits an arborescence $A_i$ with $e \in A_i$, 
$P'$ visits $A_i$ and $A_i'$ 
(see \cref{fig:multiplication}).
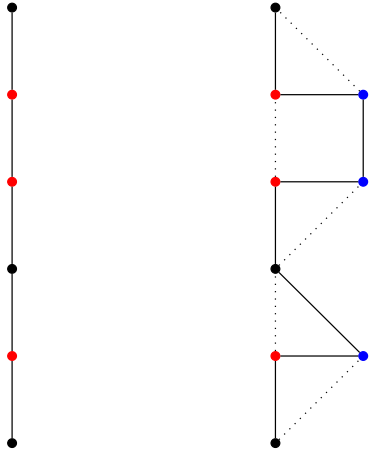
\begin{figure}[h!]
    \centering
    \begin{tikzpicture}
        \foreach \i in {1,3,6}{
            \node[inner sep=1pt,circle, draw=black,fill=black] (a\i) at (0,\i){};
        }
        \foreach \i in {2,4,5}{
            \node[inner sep=1pt,circle, draw=red,fill=red] (a\i) at (0,\i){};
        }
        \foreach \i in {1,..., 5}{
            \pgfmathsetmacro{\ii}{int(\i +1)}
            \draw (a\i) -- (a\ii);
        }

        \begin{scope}[shift = {(3,0)}]
        \foreach \i in {1,3,6}{
            \node[inner sep=1pt,circle, draw=black,fill=black] (b\i) at (0,\i){};
        }
        \foreach \i in {2,4,5}{
            \node[inner sep=1pt,circle, draw=red,fill=red] (b\i) at (0,\i){};
            \node[inner sep=1pt,circle, draw=blue,fill=blue] (c\i) at (1,\i){};
            \draw (b\i)--(c\i);
        }

        \draw (b1) -- (b2) (c2) -- (b3) -- (b4) (c4) -- (c5) (b5) -- (b6);
        \draw[dotted] (b1) -- (c2) (b2) -- (b3) -- (c4) (b4) -- (b5) (c5) -- (b6);
        \end{scope}
    \end{tikzpicture}
    \caption{Constructing a Hamiltonian path of $\FD{\vec{G'}}r$ from a Hamiltonian path of $\FD{\vec G}r$. 
    In the Hamiltonian path of $\FD{\vec G}r$
    on the left, the vertices that correspond to arborescences containing $e$ are drawn in red. On the right a Hamiltoninan path of $\FD{\vec G'}r$. The vertices corresponding to arborescences containing $e$ are drawn in red, those containing $e'$ in blue. The dotted edges are edges that are
    present in the flip graph,
    but not 
    used in the Hamiltonian path.
    }\label{fig:multiplication}
\end{figure}
\end{proof}

\section{Directed graphs with no Hamiltonian cycle in their flip graph}
In this section, we give several counterexamples witnessing that the flip graph
of a directed graph may not contain a Hamiltonian cycle.

\subsection{Flip graph can be paths}
In \cite{rao-raju-1972}, Rao and Raju showed that the flip graph of every bidirected
cycle is a path. We recall their proof here:
\begin{lemma}[\cite{rao-raju-1972}]
  Let $\vec G$ be the directed graph obtained by replacing each edge of a
  $n$-cycle by a bigon. The flip graph of $\vec G$ is a path on $n$ vertices.
\end{lemma}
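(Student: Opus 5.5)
Label the cycle vertices $r=v_0,v_1,\dots,v_{n-1}$ in cyclic order, with arcs $v_iv_{i+1}$ and $v_{i+1}v_i$ for all $i$ (indices mod $n$). The plan is to first list all arborescences explicitly, and then determine which pairs differ by a single flip.

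\textbf{Enumerating the arborescences.} The support of $\vec G$ is the $n$-cycle, so every spanning tree of the support is the cycle with one edge removed. Removing the edge $\{v_k,v_{k+1}\}$, for $k\in\{0,\dots,n-1\}$, leaves a path. Orienting this path away from $r$ gives a unique arborescence $A_k$: it uses the arcs $v_0v_1,\dots,v_{k-1}v_k$ going clockwise, and the arcs $v_0v_{n-1},v_{n-1}v_{n-2},\dots,v_{k+2}v_{k+1}$ going counterclockwise. All the needed arcs exist because every edge is a bigon. Distinct $k$ give distinct underlying trees, so there are exactly $n$ arborescences.

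\textbf{Determining the flips.} Two arborescences are adjacent in the flip graph exactly when they share $n-2$ arcs. Their underlying undirected trees are then two spanning trees of the cycle that share $n-2$ edges, and this holds for any two distinct $k,k'$. We must therefore compare orientations. The edges strictly between positions $k$ and $k'$ are oriented clockwise in one arborescence and counterclockwise in the other. Concretely, with $k<k'$, for each $i$ with $k<i<k'$ the edge $\{v_i,v_{i+1}\}$ is oriented as $v_iv_{i+1}$ in $A_{k'}$ and as $v_{i+1}v_i$ in $A_k$. Hence $A_k$ and $A_{k'}$ share exactly $n-1-(k'-k)$ arcs. They differ by one flip if and only if $k'-k=1$.

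This ordering is linear, with $k$ running from $0$ to $n-1$. The pair $A_0,A_{n-1}$ would be adjacent only if $n-1=1$, and the small case $n=2$ is trivial anyway. So the flip graph is the path $A_0A_1\cdots A_{n-1}$.

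The step needing the most care is the arc count for non-consecutive $k,k'$: one must check that every edge strictly between the two removed edges really reverses orientation. This follows because the unique $r$-path to each such vertex switches sides of the cycle.
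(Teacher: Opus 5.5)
Your proof is correct and follows essentially the paper's route: both arguments parametrize the arborescences by the one cycle edge that is left out, which the paper phrases as the interval $[1,i]$ of vertices reached clockwise. The only difference is in the adjacency step. The paper argues locally that only the arcs entering the two leaves can be flipped, which moves the interval endpoint by one. You instead compute the overlap $n-1-|k'-k|$ for every pair; this is equally valid and makes explicit why $A_0$ and $A_{n-1}$ are not adjacent.
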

\begin{proof}
  Let $V = \{v_1, \dots, v_{n}\}$ be the vertices of $\vec G$, such that $v_1$ is
  the root and each vertex $v_i$ is incident to $v_{i-1}$ and
  $v_{i+1}$, where the indices are considered modulo $n$. There are only two paths form $v_1$ to $v_i$ in $\vec G$:
  the first one is $v_1, \dots,v_i$ and  the second $v_1, v_n, \dots,v_i$. So every
  arborescence of $\vec G$ rooted in $r$ corresponds to a unique interval
  $[1, i]$ of indices such that $v_j$ is reached through a path using the arc
  $v_1v_2$ for each $j \in [1,i]$. In each of these arborescences, only the arcs entering the leafs can be
  swapped. Thus the reconfiguration graph of $\vec G$ is path on $n$ vertices
  (see \cref{fig:flip_path}).
    \begin{figure}[ht!]
    \centering
     \begin{tikzpicture}[decoration={
        markings,
        mark=at position 0.6 with {\arrow[scale = 1.5]{>}}}
      ]

      \begin{scope}[scale =.5, shift={(0,0)}]
        \node[rectangle, minimum width = 1.6cm, minimum height = 1cm] (A) at
        (2.5,.5) {};
        \foreach \i in {1,2,3,4}{
          \coordinate (v\i) at ($(\i,0)$);
        }
        \node[vertex,double] (r) at (2.5,1){};
        \draw[postaction={decorate}] (r) to (v1);
        
        \foreach \i in {1,2,3}{
          \pgfmathsetmacro{\ii}{\i+1}
          \draw[postaction={decorate}, bend left] (v\i) to (v\ii);
        }

        \foreach \i in {1,2,3,4}{
          \node[vertex] at (v\i){};
        }
      \end{scope}

      \begin{scope}[scale =.5, shift={(4,0)}]
        \node[rectangle, minimum width = 1.6cm, minimum height = 1cm] (B) at
        (2.5,.5) {};
        \foreach \i in {1,2,3,4}{
          \coordinate (v\i) at ($(\i,0)$);
        }
        \node[vertex,double] (r) at (2.5,1){};
        \draw[postaction={decorate}] (r) to (v1);
        \draw[postaction={decorate}] (r) to (v4);
        
        \foreach \i in {1,2}{
          \pgfmathsetmacro{\ii}{\i+1}
          \draw[postaction={decorate}, bend left] (v\i) to (v\ii);
        }

        \foreach \i in {1,2,3,4}{
          \node[vertex] at (v\i){};
        }
      \end{scope}

      \begin{scope}[scale =.5, shift={(8,0)}]
        \node[rectangle, minimum width = 1.6cm, minimum height = 1cm] (C) at
        (2.5,.5) {};
        \foreach \i in {1,2,3,4}{
          \coordinate (v\i) at ($(\i,0)$);
        }
        \node[vertex,double] (r) at (2.5,1){};
        \draw[postaction={decorate}] (r) to (v1);
        \draw[postaction={decorate}] (r) to (v4);

        \foreach \i in {1}{
          \pgfmathsetmacro{\ii}{\i+1}
          \draw[postaction={decorate}, bend left] (v\i) to (v\ii);
        }

        \foreach \i in {3}{
          \pgfmathsetmacro{\ii}{\i+1}
          \draw[postaction={decorate}, bend left] (v\ii) to (v\i);
        }

        \foreach \i in {1,2,3,4}{
          \node[vertex] at (v\i){};
        }
      \end{scope}

      \begin{scope}[scale =.5, shift={(12,0)}]
        \node[rectangle, minimum width = 1.6cm, minimum height = 1cm] (D) at
        (2.5,.5) {};
        \foreach \i in {1,2,3,4}{
          \coordinate (v\i) at ($(\i,0)$);
        }
        \node[vertex,double] (r) at (2.5,1){};
        \draw[postaction={decorate}] (r) to (v1);
        \draw[postaction={decorate}] (r) to (v4);

        \foreach \i in {2,3}{
          \pgfmathsetmacro{\ii}{\i+1}
          \draw[postaction={decorate}, bend left] (v\ii) to (v\i);
        }

        \foreach \i in {1,2,3,4}{
          \node[vertex] at (v\i){};
        }
      \end{scope}

      \begin{scope}[scale =.5, shift={(16,0)}]
        \node[rectangle, minimum width = 1.6cm, minimum height = 1cm] (E) at
        (2.5,.5) {};
        \foreach \i in {1,2,3,4}{
          \coordinate (v\i) at ($(\i,0)$);
        }
        \node[vertex,double] (r) at (2.5,1){};
        \draw[postaction={decorate}] (r) to (v4);
        
        \foreach \i in {1,2,3}{
          \pgfmathsetmacro{\ii}{\i+1}
          \draw[postaction={decorate}, bend left] (v\ii) to (v\i);
        }

        \foreach \i in {1,2,3,4}{
          \node[vertex] at (v\i){};
        }
      \end{scope}

      \draw[ultra thick] (A) -- (B) -- (C) -- (D) -- (E);
    \end{tikzpicture}
    \caption{The flip graph of a bidirected 5-cycle}
    \label{fig:flip_path}
  \end{figure}
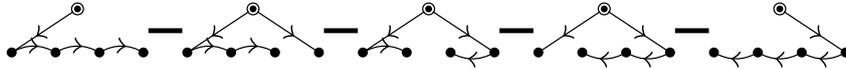
\end{proof}

\subsection{Unbalanced bipartite flip graphs}
\label{sec:bipartite}

\begin{theorem}\label{thm:bipartite}
  Let $G$ be a directed graph rooted in $r$, in which all vertices have indegree
  at most two. Then the flip graph $\FD{\vec G}r$ is bipartite.
\end{theorem}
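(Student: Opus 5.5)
The plan is to exhibit an explicit proper $2$-colouring of $\FD{\vec G}r$ using a parity function on arborescences. The key fact is that an arborescence rooted in $r$ is determined by the choice, for every vertex $v \neq r$, of exactly one arc entering $v$. Arcs entering $r$ never appear in any arborescence, so we may ignore them or delete them by \cref{obs:arc_deletion}.

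First, I would fix, for every vertex $v \neq r$ of indegree exactly two, a labelling of its two incoming arcs by $0$ and $1$, chosen arbitrarily. This also covers the multigraph case, where the two arcs may be parallel. Vertices of indegree one have a forced incoming arc in every arborescence, and vertices of indegree zero (other than $r$) cannot occur if at least one arborescence exists. For an arborescence $A$, define $\sigma(A)$ as the sum modulo $2$ of the labels of the arcs of $A$ entering indegree-two vertices.

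Second, I would check that every edge of the flip graph changes $\sigma$. Suppose $A$ and $B$ differ by an arc flip. By definition they share all arcs but one, so there is a unique vertex $v$ whose entering arc differs: $A$ uses $e$ and $B$ uses $e'\neq e$, both entering $v$. Hence $v$ has indegree at least two, and therefore exactly two. So $e$ and $e'$ are its two incoming arcs, with labels $0$ and $1$ in some order. All other chosen arcs coincide, so $\sigma(A) \neq \sigma(B)$. The classes $\sigma^{-1}(0)$ and $\sigma^{-1}(1)$ are thus independent sets, and the flip graph is bipartite.

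No step is a genuine obstacle. The only point needing care is that a flip changes the in-arc of exactly one vertex, and that the indegree bound forces this to be a swap between the two labelled arcs; both follow directly from the definition of an arc flip.
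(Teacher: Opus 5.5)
Your proof is correct and is essentially the paper's argument. The paper labels the two incoming arcs of each indegree-two vertex $+1$ and $-1$ and uses the product of the weights where you use the sum of $0/1$ labels modulo $2$. The multiplicative form is chosen so that it can be fed directly into the weighted matrix-tree theorem in the proof of \cref{thm:balanced_bipartition}.
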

\begin{proof}
  We can express the parity of the arborescences as a multiplicative \defin{weight}, as follows: If
  $xz$ and $yz$ are two incoming arcs of the same vertex $z$ in $G$, we arbitrarily
  assign weight $w_{wz}=+1$ to one of these arcs and $w_{yz}=-1$ to the other
  arc. Every arc that is the single incoming arc of a vertex gets weight $+1$.
  The weight $w(A)\in \{+1,-1\}$ of an arborescence $A$ is then the product of the weights of its
  arcs. Flipping an arc changes the sign of this weight, and thus the weight defines a
  bipartition of the flip graph.
\end{proof}

If the two bipartition classes differ in size by at least one, then this
constitutes yet another counterexample of a directed graph without a Hamiltonian
cycle in its flip graph. For example, the graph in \cref{fig:graph13}
has a bipartite flip graph with thirteen vertices, which is shown in
\cref{fig:flip_bipartite13}. We will also see another example with seven
arborescences
in the proof of
\cref{thm:tournaments}, see \cref{fig:flip_bipartite7}.

\begin{figure}[h!]
  \centering
  \begin{minipage}{.35\textwidth}
  \centering
  \begin{tikzpicture}[
    normalnode/.style={draw,circle, minimum size=3mm},
    rootnode/.style={draw,double,circle, minimum size=5mm},
    decoration={
      markings,
      mark=at position 0.6 with {\arrow[scale=1.5]{>}}}
    ]
    \node[rootnode] at (0,1) (v1) {1};
    \node[normalnode] at (1,0) (v2) {2};
    \node[normalnode] at (0,0) (v3) {3};
    \node[normalnode] at (-1,0) (v4) {4};
    \node[normalnode] at (0,-1) (v5) {5};

    \draw[postaction={decorate}]  (v1) -- (v2);
    \draw[postaction={decorate}]  (v1) -- (v3);
    \draw[postaction={decorate}]  (v1) -- (v4);
    \draw[postaction={decorate}]  (v2) -- (v3);
    \draw[postaction={decorate}]  (v3) -- (v4);
    \draw[postaction={decorate}]  (v4) -- (v5);
    \draw[postaction={decorate}]  (v3) -- (v5);
    \draw[postaction={decorate}]  (v5) -- (v2);
  \end{tikzpicture}
  \end{minipage}
 \hfill
  \begin{minipage}{.55\textwidth}
  \begin{displaymath}
    L = \begin{pmatrix}
    0&1&1&1&0\\
    0&0&-1&0&0\\
    0&0&0&-1&1\\
    0&0&0&0&-1\\
    0&-1&0&0&0\\
  \end{pmatrix}
  \end{displaymath}
  \end{minipage}

  \caption{A graph with 13 arborescences and no Hamiltonian cycle in its flip graph. On the right, its weighted Laplace matrix (defined in \cref{thm:balanced_bipartition}), where the $i$-th row and column correspond to the vertex labelled $i$.}
  \label{fig:graph13}
\end{figure}

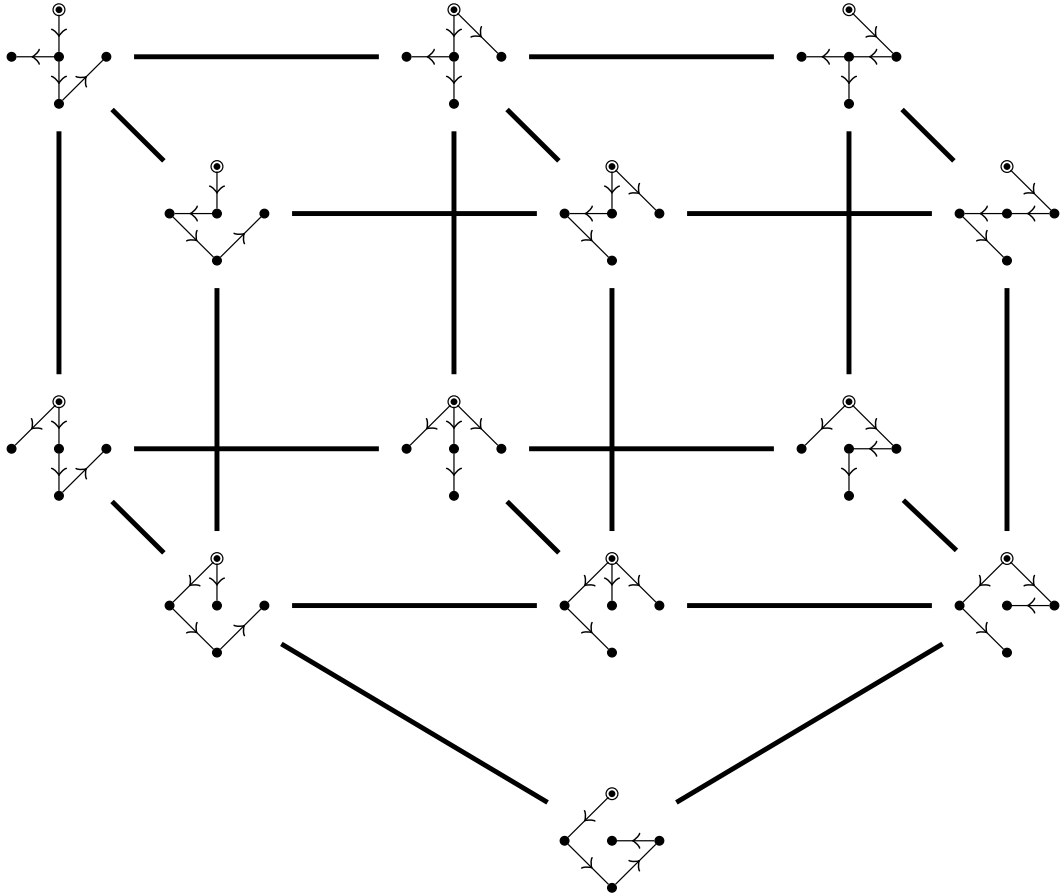
\begin{figure}[h!]
  \centering
  \begin{tikzpicture}[scale=.9,decoration={
      markings,
      mark=at position 0.6 with {\arrow[scale=1.5]{>}}}
    ]
    \foreach \i/\x/\y in {
      1/0/0,2/5/0,3/10/0,
      4/2/-2,5/7/-2,6/12/-2,
      7/0/-5,8/5/-5,9/10/-5,
      10/2/-7,11/7/-7,12/12/-7,
      13/7/-10}
    {
      \begin{scope}[shift={(\x,\y)}, scale=.6]
        \node[anchor=center,circle,inner sep=.6cm] (A\i) at (0,0) {};
        \node[vertex,double] at (0,1) (v\i1) {};
        \node[vertex] at (1,0) (v\i2) {};
        \node[vertex] at (0,0) (v\i3) {};
        \node[vertex] at (-1,0) (v\i4) {};
        \node[vertex] at (0,-1) (v\i5) {};
      \end{scope}arborescences
    }

    \draw[ultra thick] (A1) -- (A2) -- (A3);
    \draw[ultra thick] (A4) -- (A5) -- (A6);
    \draw[ultra thick] (A7) -- (A8) -- (A9);
    \draw[ultra thick] (A10) -- (A11) -- (A12);
    \foreach \i in {1,2,3,7,8,9}{
      \pgfmathsetmacro{\j}{int(\i+3)};
      \draw[ultra thick] (A\i) -- (A\j);
    }
    \foreach \i in {1,...,6}{
      \pgfmathsetmacro{\j}{int(\i+6)};
      \draw[ultra thick] (A\i) -- (A\j);
    }
    \draw[ultra thick] (A10) -- (A13) -- (A12);
    
    \draw[postaction={decorate}]  (v11) -- (v13);    
    \draw[postaction={decorate}]  (v13) -- (v14);    
    \draw[postaction={decorate}]  (v13) -- (v15);    
    \draw[postaction={decorate}]  (v15) -- (v12);
arborescences
    \draw[postaction={decorate}]  (v21) -- (v22);
    \draw[postaction={decorate}]  (v21) -- (v23);
    \draw[postaction={decorate}]  (v23) -- (v24);
    \draw[postaction={decorate}]  (v23) -- (v25);

    \draw[postaction={decorate}]  (v31) -- (v32);
    \draw[postaction={decorate}]  (v32) -- (v33);
    \draw[postaction={decorate}]  (v33) -- (v34);
    \draw[postaction={decorate}]  (v33) -- (v35);

    \draw[postaction={decorate}]  (v41) -- (v43);
    \draw[postaction={decorate}]  (v43) -- (v44);
    \draw[postaction={decorate}]  (v44) -- (v45);
    \draw[postaction={decorate}]  (v45) -- (v42);

    \draw[postaction={decorate}]  (v51) -- (v52);
    \draw[postaction={decorate}]  (v51) -- (v53);
    \draw[postaction={decorate}]  (v53) -- (v54);
    \draw[postaction={decorate}]  (v54) -- (v55);

    \draw[postaction={decorate}]  (v61) -- (v62);
    \draw[postaction={decorate}]  (v62) -- (v63);
    \draw[postaction={decorate}]  (v63) -- (v64);
    \draw[postaction={decorate}]  (v64) -- (v65);

    \draw[postaction={decorate}]  (v71) -- (v73);
    \draw[postaction={decorate}]  (v71) -- (v74);
    \draw[postaction={decorate}]  (v73) -- (v75);
    \draw[postaction={decorate}]  (v75) -- (v72);

    \draw[postaction={decorate}]  (v81) -- (v82);
    \draw[postaction={decorate}]  (v81) -- (v83);arborescences
    \draw[postaction={decorate}]  (v81) -- (v84);
    \draw[postaction={decorate}]  (v83) -- (v85);
    
    \draw[postaction={decorate}]  (v91) -- (v92);
    \draw[postaction={decorate}]  (v91) -- (v94);
    \draw[postaction={decorate}]  (v92) -- (v93);
    \draw[postaction={decorate}]  (v93) -- (v95);

    \draw[postaction={decorate}]  (v101) -- (v103);
    \draw[postaction={decorate}]  (v101) -- (v104);
    \draw[postaction={decorate}]  (v104) -- (v105);
    \draw[postaction={decorate}]  (v105) -- (v102);

    \draw[postaction={decorate}]  (v111) -- (v112);
    \draw[postaction={decorate}]  (v111) -- (v113);
    \draw[postaction={decorate}]  (v111) -- (v114);
    \draw[postaction={decorate}]  (v114) -- (v115);

    \draw[postaction={decorate}]  (v121) -- (v122);
    \draw[postaction={decorate}]  (v121) -- (v124);
    \draw[postaction={decorate}]  (v122) -- (v123);
    \draw[postaction={decorate}]  (v124) -- (v125);

    \draw[postaction={decorate}]  (v131) -- (v134);
    \draw[postaction={decorate}]  (v132) -- (v133);
    \draw[postaction={decorate}]  (v134) -- (v135);
    \draw[postaction={decorate}]  (v135) -- (v132);

  \end{tikzpicture}
  \caption{The flip graph of the graph drawn on \cref{fig:graph13}}\label{fig:flip_bipartite13}
\end{figure}

If the two bipartition classes differ in size by at least 2, this
would
immediately be
an obstacle to the existence of a Hamiltonian path in the flip path. However, we
can prove that in the flip graph of a directed graph with indegree at most two,
the size of the two parts of the bipartition differs by at most 1
(and it is easy to test whether they differ by 1, i.e. the total number of trees
is odd).

\begin{theorem}\label{thm:balanced_bipartition}
  Let $G$ be a directed graph rooted in $r$, in which all vertices have indegree
  at most two. The two bipartition classes differ in size by at most 1.
\end{theorem}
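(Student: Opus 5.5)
The plan is to express the signed count $\sum_A w(A)$ as a determinant via the weighted matrix-tree theorem, and to show that this determinant lies in $\{-1,0,1\}$. Here $w$ is the weight from the proof of \cref{thm:bipartite}, and the two bipartition classes are the arborescences of weight $+1$ and of weight $-1$. The difference of their sizes is exactly $\sum_A w(A)$.

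By \cref{lem:built_on} and \cref{obs:arc_deletion}, I first discard arcs entering $r$ and loops, since they appear in no arborescence. Then I define the weighted Laplacian $L$ as suggested by \cref{fig:graph13}. For $u\neq v$, set $L_{uv}=-w_{uv}$ if $uv$ is an arc and $0$ otherwise, and put $L_{vv}=\sum_{u} w_{uv}$ on the diagonal. With the normalisation of the figure, the off-diagonal entry in column $v$ is $1$ when $v$ has a single in-arc, and $\pm1$ when $v$ has two in-arcs (these cancel $\mp1$ against the diagonal). The directed weighted matrix-tree theorem (Tutte) then gives that the determinant of $L$ with the row and column of $r$ removed equals $\sum_A\prod_{e\in A}w_e=\sum_A w(A)$, up to a global sign convention that I will fix. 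Concretely, since the in-weights at each vertex $v\neq r$ sum to $0$ or $2$ when $v$ has two in-arcs, I will choose the signs so that the column of $v$ in the reduced Laplacian has a simple shape.

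The key step is to compute the column sums of $L$. For $v\neq r$ of indegree $2$, the column of $v$ contains $+1$ and $-1$ off the diagonal (as $L_{uv}=-w_{uv}$), and its diagonal entry is $w_{xv}+w_{yv}=0$. For $v$ of indegree $1$, the column has $-1$ off the diagonal and $1$ on the diagonal. So every column of the reduced matrix $M=L_{\bar r,\bar r}$ has at most two nonzero entries, each equal to $\pm1$. The row of $r$ has been deleted, so columns may lose one entry. Following the figure's convention, where the diagonal is zero and the entries are $\pm 1$, I will check which normalisation matches the determinant identity. Either way, each column of $M$ has at most two nonzero entries in $\{\pm1\}$.

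It then remains to show that a square matrix whose columns each have at most two nonzero entries, all in $\{\pm1\}$, has determinant in $\{-1,0,1\}$ whenever that determinant counts something this structured. This is the main obstacle: such matrices are not totally unimodular in general (odd cycles can give determinant $\pm2$). I will therefore use the sign freedom in choosing $w$. Flipping the assignment at a vertex $v$ multiplies column $v$ by $-1$ and changes $\sum_A w(A)$ only by a sign, so I can try to make each two-entry column have entries $+1,-1$, with single-entry columns arising from arcs out of $r$. A matrix in which every column has at most one $+1$ and at most one $-1$ is the transpose of a network matrix, hence totally unimodular, and the determinant is then $0$ or $\pm1$.

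The difficulty is that the diagonal entries interfere. I expect the real argument to be an expansion along a column with a single nonzero entry (an arc from $r$ or an indegree-one vertex), or an induction that contracts/deletes. Concretely, I will induct on the number of vertices. If some $v$ has indegree $1$, its arc lies in every arborescence, and contracting it (as in \cref{lem:arc_contraction}) preserves the signed count. If all non-root vertices have indegree $2$, I pick a vertex $v$ with an in-arc from $r$ and split the arborescences by which in-arc of $v$ they use. The identity becomes $\Delta(G)=\Delta(G-xv)-\Delta(G-rv)$, and the real work is to show that these two terms cannot both be $\pm1$ with the same sign, presumably by exhibiting a sign-reversing involution pairing most terms.
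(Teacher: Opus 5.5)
There is a genuine gap: your proposal does not prove the statement. It ends by saying that the real work is to show that $\Delta(G-xv)$ and $\Delta(G-rv)$ cannot both be $\pm1$ with the same sign, ``presumably'' by a sign-reversing involution. That claim is the entire content of the theorem, and you leave it unproved. Your recursion $\Delta(G)=\Delta(G-xv)-\Delta(G-rv)$ is just linearity of the determinant in column $v$. An induction hypothesis putting each term in $\{-1,0,1\}$ only gives $|\Delta(G)|\le 2$, and nothing in your graph-level induction controls the relative sign of the two terms.

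The missing idea is one you had already computed and then set aside. Take $L_{uv}=-w_{uv}$ with zero column sums. The column of an indegree-two vertex then has off-diagonal entries $+1$ and $-1$ and a $0$ diagonal. The column of an indegree-one vertex has $-1$ off the diagonal and $+1$ on it. So every column of $L$ has at most one $+1$ and at most one $-1$; the diagonal does not interfere at all. Deleting the row and column of $r$ preserves this property.

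This is exactly the hypothesis of the total-unimodularity fact you quote (Heller--Tompkins, or incidence matrices of digraphs), so $\det\tilde L\in\{-1,0,1\}$ follows at once. The odd-cycle obstruction you worry about needs a column with two nonzeros of the same sign, which never occurs here. Your proposed sign freedom could not have helped anyway, since negating a column does not change whether its two nonzeros have opposite signs.

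The paper's proof is exactly this. It records three properties of $\tilde L$: entries in $\{0,\pm1\}$, at most two nonzeros per column, and opposite signs when there are two. It then concludes either by total unimodularity or directly, as follows.
\begin{itemize}
\item A zero column gives determinant $0$.
\item A column with a single nonzero is expanded, leaving a smaller matrix with the same three properties. In your all-indegree-two case, the column of a vertex $v$ with an in-arc from $r$ has its only nonzero in row $x$, so this applies.
\item If every column has one $+1$ and one $-1$, the rows sum to zero and the determinant is $0$.
\end{itemize}
Unlike your deletion recursion, this expansion gives $\Delta = \pm\det(\text{minor})$ with no difference of two terms. Because it is phrased for a class of matrices closed under deleting a row and a column, rather than for a class of graphs, the induction closes.
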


\begin{proof}
 Denote $w(A)$ the weight function defined in the proof of \cref{thm:bipartite}. We claim
  that
  \begin{displaymath}
      \left\lvert \sum_A w(A) \right\rvert \le 1.
  \end{displaymath}
  The sum $\sum_A w(A)$ over all arborescences $A$ expresses the difference
  between the number of ``positive'' and ``negative'' arborescences. 

  This sum
  can be calculated by the weighted matrix-tree
  theorem (see for example \cite{chaiken78Matrix}), which says the following:  Let $L=(L_{ij})$ be the weighted Laplace matrix,
  which has entry $L_{ij} = -w_{ij}$ for each arc $i\to j$, and $L_{ij}=0$
  for all off-diagonal entries that don't correspond to an arc.  The diagonal
  entries $L_{ii}$ are chosen to make the column sums zero.  Let $\tilde L$ be
  the matrix $L$ after removing the row and column corresponding to the root.
Then  
\begin{displaymath}
    \sum_A w(A) = \det \tilde L.
\end{displaymath}
In our case, the matrix $L$ looks as follows:
For a vertex with two incoming arcs,
 the corresponding column of $L$
has two entries $+1$ and $-1$, and hence the diagonal entry in this column, which is
supposed to balance the column sum, is zero.
For a vertex with a single incoming arc,
 the corresponding column of $L$
has an entry $+1$, and hence the diagonal entry is $-1$.

The matrix $\tilde L$ has therefore the following properties.
\begin{itemize}
\item All entries are $0$, $+1$, or $-1$.
    \item There are at most two nonzeros per column.
    \item If a column has two nonzeros, they are of opposite sign.
    \end{itemize}
The matrices arising from network flow problems have the same properties.    
The Theorem of Heller and Tompkins~\cite{HT-56}
characterises
the totally unimodular matrices among the matrices with the first two
properties, and it implies in this case that
$\tilde L$ is totally unimodular, and in particular, it has determinant
$0$ or $\pm1$.

It is easy to see this directly:
If there is a zero column, the determinant is zero.
If there is a column with a single nonzero, we expand the determinant by this column, and we obtain a smaller matrix that fulfills the same conditions.
Repeating this process, we either find a zero column at some point,
or we reduce the matrix to a trivial matrix of size  $1\times1$,
or to a matrix where each column contains a $+1$ and a $-1$. 
In the first and third cases,
$\det \tilde L=0$;
in the second case,
$\det \tilde L= \pm 1$.
\end{proof}

The procedure can easily be carried out combinatorially. Deleting the row of the root means
that all neighbours of the root lose an incoming arc.
A vertex $j$ with a single incoming arc corresponds to a
 column $j$ with a single nonzero entry, say $L_{ij}$.
 Expanding this column
 deletes
row $i$ and column $j$. In the graph, this corresponds to removing all edges out of $i$, which
means that the outneighbours of $i$ will have their indegree reduced, and the process can continue.

If the process stops before dismantling the whole graph, the determinant is zero, otherwise it is $\pm1$.

\subsection{Arborescences of degree one in the flip graph}

Another property preventing the existence of Hamiltonian cycles in the flip
graph is the existence of degree-one vertices, which might occur even when the
flip graph is not bipartite (and thus not a path). For example, the graph drawn on
\cref{fig:flip_G1} is an oriented graph with this property.
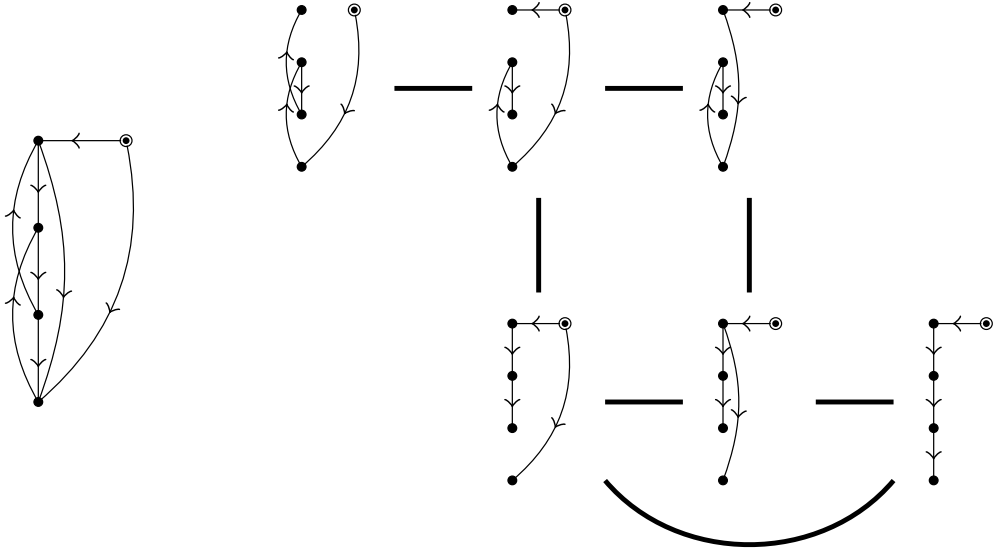
\begin{figure}[ht!]
  \centering
  \begin{tikzpicture}[decoration={
      markings,
      mark=at position 0.6 with {\arrow[scale = 1.5]{>}}}
    ]
    \begin{scope}[shift={(-3,-1.5)}]
      \foreach \i in {0,1,2,3}{
        \coordinate (v\i) at ($(0,{-\i})$); 
      }

      \node[vertex,double] (r) at (1,0){};
      \draw[postaction={decorate}]  (r) -- (v0);
      \draw[postaction={decorate}, bend left] (r) to (v3);
      
      \draw[postaction={decorate}]  (v0) -- (v1);
      \draw[postaction={decorate}] (v1) -- (v2);
      \draw[postaction={decorate}]  (v2) -- (v3);

      \draw[postaction={decorate}, bend left]  (v2) to (v0);
      \draw[postaction={decorate}, bend left]  (v3) to (v1);
      \draw[postaction={decorate}, bend left=20]  (v0) to (v3);

      \foreach \i in {0,...,3}{
        \node[vertex] at (v\i){};
      }
    \end{scope}

    \begin{scope}[scale =.6, shift={(0,0)}]
      \node[rectangle, minimum width = 1.5cm, minimum height = 2.5cm] (A) at
      (.5,-1.5) {};
      \foreach \i in {0,1,2,3}{
        \coordinate (v\i) at ($(0,{-\i})$); 
      }
      \node[vertex,double] (r) at (1,0){};
      \draw[postaction={decorate}, bend left] (r) to (v3);
      
      \draw[postaction={decorate}] (v1) -- (v2);

      \draw[postaction={decorate}, bend left]  (v2) to (v0);
      \draw[postaction={decorate}, bend left]  (v3) to (v1);
      
      \foreach \i in {0,...,3}{
        \node[vertex] at (v\i){};
      }
    \end{scope}

    \begin{scope}[scale =.6, shift={(4,0)}]
      \node[rectangle, minimum width = 1.5cm, minimum height = 2.5cm] (B) at
      (.5,-1.5) {};

      \foreach \i in {0,1,2,3}{
        \coordinate (v\i) at ($(0,{-\i})$); 
      }

      \node[vertex,double] (r) at (1,0){};
      \draw[postaction={decorate}]  (r) -- (v0);
      \draw[postaction={decorate}, bend left] (r) to (v3);
      
      \draw[postaction={decorate}] (v1) -- (v2);

      \draw[postaction={decorate}, bend left]  (v3) to (v1);

      \foreach \i in {0,...,3}{
        \node[vertex] at (v\i){};
      }
    \end{scope}

    \begin{scope}[scale =.6, shift={(4,-6)}]
      \node[rectangle, minimum width = 1.5cm, minimum height = 2.5cm] (C) at
      (.5,-1.5) {};
      
      \foreach \i in {0,1,2,3}{
        \coordinate (v\i) at ($(0,{-\i})$); 
      }

      \node[vertex,double] (r) at (1,0){};
      \draw[postaction={decorate}]  (r) -- (v0);
      \draw[postaction={decorate}, bend left] (r) to (v3);

      \draw[postaction={decorate}]  (v0) -- (v1);
      \draw[postaction={decorate}] (v1) -- (v2);

      \foreach \i in {0,...,3}{
        \node[vertex] at (v\i){};
      }
    \end{scope}

    \begin{scope}[scale =.6, shift={(8,0)}]
      \node[rectangle, minimum width = 1.5cm, minimum height = 2.5cm] (D) at
      (.5,-1.5) {};

      \foreach \i in {0,1,2,3}{
        \coordinate (v\i) at ($(0,{-\i})$); 
      }

      \node[vertex,double] (r) at (1,0){};
      \draw[postaction={decorate}]  (r) -- (v0);
      
      \draw[postaction={decorate}] (v1) -- (v2);

      \draw[postaction={decorate}, bend left]  (v3) to (v1);
      \draw[postaction={decorate}, bend left=20]  (v0) to (v3);

      \foreach \i in {0,...,3}{
        \node[vertex] at (v\i){};
      }
    \end{scope}

    \begin{scope}[scale =.6, shift={(8,-6)}]
      \node[rectangle, minimum width = 1.5cm, minimum height = 2.5cm] (E) at
      (.5,-1.5) {};

      \foreach \i in {0,1,2,3}{
        \coordinate (v\i) at ($(0,{-\i})$); 
      }

      \node[vertex,double] (r) at (1,0){};
      \draw[postaction={decorate}]  (r) -- (v0);
      
      \draw[postaction={decorate}]  (v0) -- (v1);
      \draw[postaction={decorate}] (v1) -- (v2);

      \draw[postaction={decorate}, bend left=20]  (v0) to (v3);
      
      \foreach \i in {0,...,3}{
        \node[vertex] at (v\i){};
      }
    \end{scope}

    \begin{scope}[scale =.6, shift={(12,-6)}]
      \node[rectangle, minimum width = 1.5cm, minimum height = 2.5cm] (F) at
      (.5,-1.5) {};

      \foreach \i in {0,1,2,3}{
        \coordinate (v\i) at ($(0,{-\i})$); 
      }

      \node[vertex,double] (r) at (1,0){};
      \draw[postaction={decorate}]  (r) -- (v0);
      
      \draw[postaction={decorate}]  (v0) -- (v1);
      \draw[postaction={decorate}] (v1) -- (v2);
      \draw[postaction={decorate}]  (v2) -- (v3);

      \foreach \i in {0,...,3}{
        \node[vertex] at (v\i){};
      }
    \end{scope}

    \draw[ultra thick] (A) -- (B) -- (C) -- (E) -- (D) (B) -- (D);
    \draw[ultra thick] (E) -- (F) edge[ultra thick, bend left = 50] (C);
  \end{tikzpicture}
  \caption{An oriented graph and its flip graph, which contains a degree-one vertex}
  \label{fig:flip_G1}
\end{figure}

The degree-one vertices in the flip graph can be characterised as follows:
\begin{observation}
  Let $\vec{G}$ be a directed graph rooted in $r$ and $A$ an arborescence of
  $\vec{G}$ such that $A$ has degree one in the flip graph $\FD{\vec{G}}r$, by
  flipping in some arc $uv$. Then $\vec{G}-uv$ is built on $A$.
\end{observation}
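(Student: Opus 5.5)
The plan is to prove the statement directly from the flip criterion recalled in the preliminaries: an arc $xy \in E(G)\setminus E(A)$ can be flipped in if and only if $y$ is not an ancestor of $x$ in $A$, that is, $x \notin D_A(y)$. We then check the definition of ``built on'' with $A$ playing the role of the base graph and $\vec G - uv$ the larger graph.

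First, the vertex sets agree, and we need $E(A) \subseteq E(\vec G - uv)$. Since $uv$ is flipped in from $A$, we have $uv \notin E(A)$. So every arc of $A$ survives the deletion of $uv$.

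Second, take any arc $xy \in E(\vec G - uv)\setminus E(A)$; we must show $x$ is a descendant of $y$ in $A$. Suppose not. In the tree $A$, the descendants $D_A(y)$ are exactly the vertices of the subtree hanging from $y$. Hence $x\notin D_A(y)$ means $y$ does not lie on the $r$--$x$ path of $A$, and by the flip criterion $xy$ can be flipped in. This yields an arborescence $A_{xy} = A - (\text{arc entering } y) + xy$, which is adjacent to $A$. The arborescence $A_{uv}$ obtained by flipping in $uv$ does not contain $xy$: indeed $xy \notin E(A)$, and $xy \neq uv$ as arcs. In the multigraph setting, a parallel copy of $uv$ counts as a distinct arc, so the argument is unchanged. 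Therefore $A_{xy}\neq A_{uv}$, and $A$ has degree at least two in $\FD{\vec G}r$, a contradiction. Hence every arc of $\vec G - uv$ outside $A$ is a backedge with respect to $A$, so $\vec G - uv$ is built on $A$.

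The only delicate point, more bookkeeping than a genuine obstacle, is the identification of ``descendant in $A$'' in the sense of the definition of $D_G$ with ``lies in the subtree of $y$''. This holds because in a tree the unique $r$--$x$ path passes through $y$ exactly when $x$ lies in the subtree of $y$. One also notes that $A$ genuinely has a neighbour, namely $A_{uv}$, which is exactly why $uv$ has to be excluded.
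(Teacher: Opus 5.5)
Your proof is correct and follows the paper's argument: take an arc $xy \in E(\vec G - uv)\setminus E(A)$ with $x \notin D_A(y)$, note that $xy$ can then be flipped in, and conclude this contradicts $A$ having degree one. You also spell out two points the paper leaves implicit: that $E(A)\subseteq E(\vec G - uv)$ because $uv\notin E(A)$, and that flipping in $xy$ gives a neighbour different from the one obtained by flipping in $uv$.
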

\begin{proof}
  Let $xy \in E(\vec{G}-uv) \setminus E(A)$. Assume towards contradiction that
  $x \notin D_A(y)$. So in $A$, the arc $xy$ can be flipped in as $x$ is
  accessible from $r$ without passing through $y$ in $A$. Thus $xy = uv$, a
  contradiction.
\end{proof}

\section{A Gray code for arborescences in tournaments}\label{sec:tournaments}

We prove the following refinement of \cref{thm:tournaments}:
\begin{theorem}\label{thm:directed}
  Let $\vec{G}$ be a directed multi-graph and $r$ be a vertex of $\vec{G}$ such that
  the support of $\vec{G}-r$ is a clique (in particular, $\vec G$ can have arcs between two vertices $u$ and $v$, possibly in opposite directions). Then the flip graph $\FD{\vec{G}}{r}$ is empty or
  has a Hamiltonian path.
\end{theorem}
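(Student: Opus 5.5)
We proceed by induction on the number of vertices of $\vec G$. The hypothesis is designed so that the class is closed under contracting an arc out of the root: if the support of $\vec G - r$ is a clique and $rx$ is an arc, then in $\vec G' = \vec G/rx$ the support of $\vec G' - r'$ is the support of $\vec G - \{r,x\}$, again a clique. Multiple arcs may appear, which is why the statement allows multigraphs. By \cref{lem:arc_multiplication} we may also assume, when convenient, that there are no parallel arcs, since duplicating arcs preserves Hamiltonicity. Arcs into $r$ never appear in arborescences, so by \cref{obs:arc_deletion} they may be deleted. If some vertex is unreachable, the flip graph is empty and there is nothing to prove.

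Let $x_1,\dots,x_k$ be the outneighbours of $r$. Partition the arborescences according to the first outneighbour of $r$ used: let $\T_i$ be the set of arborescences containing $rx_i$ but none of $rx_1,\dots,rx_{i-1}$. Then $\FD{\vec G}r[\T_i]$ is the flip graph, restricted to arborescences containing $rx_i$, of the graph $G_i = \vec G - \{rx_1,\dots,rx_{i-1}\}$. By \cref{lem:arc_contraction} applied to $G_i/rx_i$, this class again satisfies the hypothesis. By induction, each $\T_i$ that is nonempty has a Hamiltonian path starting from any arborescence whose contraction is a prescribed start $A'$ of a Hamiltonian path in $G_i/rx_i$.

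The core of the proof is to chain these blocks: traverse $\T_1$, then flip from the end of $\T_1$ into $\T_2$, and so on. This requires controlling endpoints. We would strengthen the induction hypothesis to something like: ``for every arc $rx$, there is a Hamiltonian path whose first arborescence contains $rx$,'' or ``a Hamiltonian path starting at a prescribed arborescence of a certain simple shape, such as a star from $r$ plus a path.'' We would then pick the order of the $x_i$ adaptively so that the last arborescence of block $i$ is adjacent to a valid starting arborescence of block $i+1$. Since $\vec G - r$ has a clique support, any two vertices $x_i, x_{i+1}$ are joined by an arc. If $x_i \to x_{i+1}$, we flip $rx_{i+1}$ out and $x_ix_{i+1}$ in, or conversely, and this link exists whenever the current arborescence has the appropriate local shape. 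Where a block's internal structure (a hypercube from \cref{lem:arc_contraction}, or a ladder when two consecutive blocks interact) leaves freedom, \cref{lem:ladder} and \cref{lem:hypercube} will be used to reach the required endpoint.

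The main obstacle is this endpoint control. The induction only gives a path from a start we choose, while the end is uncontrolled. The parity phenomena of \cref{thm:bipartite}, and small cases such as the 7-arborescence unbalanced example mentioned earlier, show that one cannot prescribe both ends. We would first try to handle the case $k$ small directly, with $k=1$ immediate by contraction and $k=2$ by a ladder-type argument. We would then reverse the traversal direction of alternate blocks, using that each block's Hamiltonian path can start at any lift of the prescribed contracted start, which leaves a choice among hypercube vertices. Base cases with three or four vertices, including the seven-arborescence example, would be checked by hand.
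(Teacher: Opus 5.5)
There is a genuine gap. The step you yourself call the main obstacle, endpoint control, is never resolved, and it is the whole content of the inductive step.

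First, the chain of blocks does not help. $\T_1$ is the set $\T_{/e}$ of arborescences containing $e=rx_1$, while $\T_2\cup\dots\cup\T_k$ is just the flip graph of $\vec G-e$. That graph lies in the class, so (inducting on the number of arcs as well) it has a Hamiltonian path $P_{-e}$.

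Second, the link you propose acts at the wrong vertex. It exchanges arcs entering $x_{i+1}$, whereas an edge between an arborescence of $\T_{i+1}$ and one of $\T_i$ must change the arc entering $x_i$, i.e.\ it is obtained by flipping in $rx_i$. In particular, each $B\in\T_{-e}$ has exactly one neighbour in $\T_{/e}$, obtained by flipping in $e$.

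Since the ends of $P_{-e}$ are uncontrolled, what you actually need is a Hamiltonian path of $\FD{\vec G}{r}[\T_{/e}]$ from an \emph{arbitrary} prescribed start, or better a Hamiltonian cycle. The induction hypothesis together with \cref{lem:arc_contraction} only yields a path starting at the lifts of one uncontrolled arborescence of $\vec G/e$. Your candidate strengthenings are not shown to be inductive, and they do not give what is needed. The first controls only which root arc the start contains. The natural stronger one (arbitrary prescribed start) is false: in an unbalanced bipartite flip graph such as the one in \cref{fig:flip_bipartite7}, both ends of every Hamiltonian path lie in the larger class.

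The missing idea, which is how the paper proceeds, is a piece with built-in flexibility inside $\T_{/e}$. Take $e=ru$ and a second outneighbour $v$ of $r$ with $u\to v$, choosing $u$ with $|N^+(u)|$ maximal, and set $f=rv$, $g=uv$.

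\begin{enumerate}
\item The arborescences containing $e$ and $f$ and those containing $e$ and $g$ are isomorphic via exchanging $f$ and $g$. Each class has a Hamiltonian path by induction and \cref{lem:arc_contraction}, so together they span a ladder. By \cref{lem:ladder}, this ladder has a Hamiltonian path from any vertex to some vertex of any other level.
\item Every arborescence of the remaining class $\T_{-f-g/e}$ is adjacent to both vertices of a rung (flip in $f$ or $g$). So, whenever the two ends of the inductive path $P_{-f-g/e}$ reach different rungs, the ladder absorbs both of them. This gives a Hamiltonian cycle of $\FD{\vec G}{r}[\T_{/e}]$, to which $P_{-e}$ attaches wherever flipping in $e$ leads.
\item The degenerate situations remain: both ends of $P_{-f-g/e}$ reach the same rung, and then both ends of $P_{-e}$ reach that rung. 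After re-cutting the resulting Hamiltonian cycles, these reduce to the rigid structure of \cref{lem:flip_clique}. That structure is settled by a case analysis using the maximality of $|N^+(u)|$.
\end{enumerate}

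None of these ingredients is present in your plan. In particular, the useful ladder does not come from two of your blocks interacting; it comes from the twin classes inside your first block. As it stands, the proposal does not prove the theorem.
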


First, we observe that \cref{thm:directed} is optimal in the sense that there exist
tournaments with a flip graph that contains a Hamiltonian path but no Hamiltonian
cycle. The graph $\vec G$ drawn on \cref{fig:flip_bipartite7} with its flip
graph is an example of such tournament, and note also that $\vec G$ falls into
the conditions of \cref{thm:bipartite}.
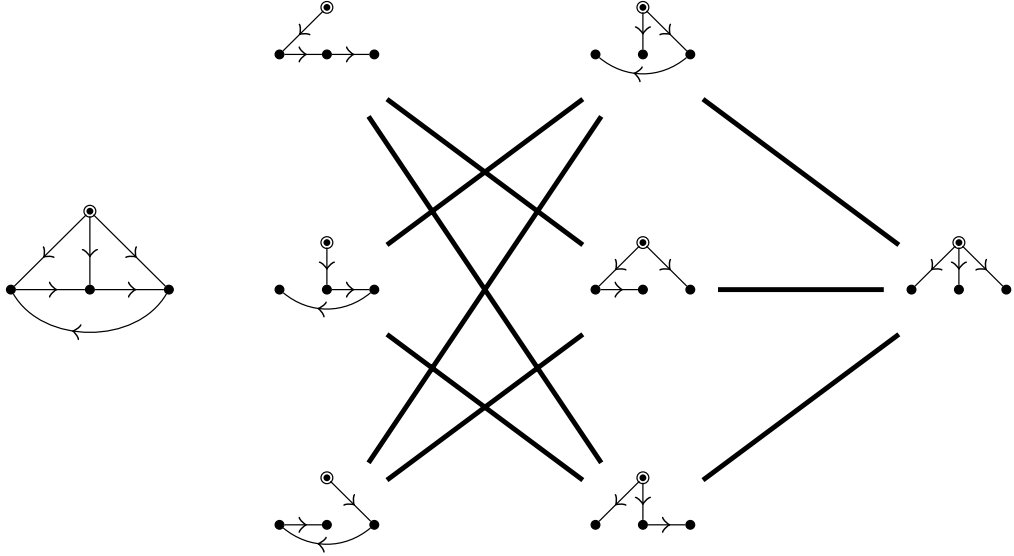
\begin{figure}[h!]
  \centering
  \begin{tikzpicture}[scale=.9,decoration={
      markings,
      mark=at position 0.6 with {\arrow[scale=1.5]{>}}}
    ]

    \begin{scope}[shift={(-3,-3)}]
      \node[vertex,double] at (0,1) (r) {};
      \node[vertex] at (1,0) (x) {};
      \node[vertex] at (0,0) (v) {};
      \node[vertex] at (-1,0) (u) {};

      \draw[postaction={decorate}]  (r) -- (u);    
      \draw[postaction={decorate}]  (r) -- (v);    
      \draw[postaction={decorate}]  (r) -- (x);    
      \draw[postaction={decorate}]  (u) -- (v);    
      \draw[postaction={decorate}]  (v) -- (x);
      \draw[postaction={decorate}, bend left = 60]  (x) to (u);
    \end{scope}

    \foreach \i/\x/\y in {
      1/0/0,2/4/0,
      3/0/-3,4/4/-3,5/8/-3,
      6/0/-6,7/4/-6}
    {
      \begin{scope}[shift={(\x,\y)}, scale=.6]
        \node[anchor=center,circle,inner sep=.6cm] (A\i) at (0,0) {};
        \node[vertex,double] at (0,1) (r\i) {};
        \node[vertex] at (1,0) (x\i) {};
        \node[vertex] at (0,0) (v\i) {};
        \node[vertex] at (-1,0) (u\i) {};
      \end{scope}
    }

    \draw[ultra thick] (A4) -- (A1) -- (A7) -- (A3) -- (A2) -- (A6) -- (A4); 
    \draw[ultra thick] (A2) -- (A5);
    \draw[ultra thick] (A4) -- (A5);
    \draw[ultra thick] (A7) -- (A5);
    
    \draw[postaction={decorate}]  (r1) -- (u1);    
    \draw[postaction={decorate}]  (u1) -- (v1);    
    \draw[postaction={decorate}]  (v1) -- (x1);
    
    \draw[postaction={decorate}]  (r2) -- (v2);
    \draw[postaction={decorate}]  (r2) -- (x2);
    \draw[postaction={decorate}, bend left=40]  (x2) to (u2);

    \draw[postaction={decorate}]  (r3) -- (v3);
    \draw[postaction={decorate}]  (v3) -- (x3);
    \draw[postaction={decorate}, bend left=40]  (x3) to (u3);

    \draw[postaction={decorate}]  (r4) -- (u4);
    \draw[postaction={decorate}]  (u4) -- (v4);
    \draw[postaction={decorate}]  (r4) -- (x4);

    \draw[postaction={decorate}]  (r5) -- (u5);
    \draw[postaction={decorate}]  (r5) -- (v5);
    \draw[postaction={decorate}]  (r5) -- (x5);

    \draw[postaction={decorate}]  (r6) -- (x6);
    \draw[postaction={decorate}, bend left=40]  (x6) to (u6);
    \draw[postaction={decorate}]  (u6) -- (v6);

    \draw[postaction={decorate}]  (r7) -- (u7);
    \draw[postaction={decorate}]  (r7) -- (v7);
    \draw[postaction={decorate}]  (v7) -- (x7);

  \end{tikzpicture}
  \caption{An oriented graph with seven arborescences and whose flip graph is
  bipartite, so without Hamiltonian cycle.}\label{fig:flip_bipartite7}
\end{figure}

The rest of this section consists in the proof of \cref{thm:directed}. We first observe that by \cref{lem:arc_multiplication}, it suffices to prove \cref{thm:directed} for directed graphs without multiple arcs.
Let $\vec{\mathcal{G}}$ denote the set of pairs $(\vec{G},r)$ where $\vec G$ is a directed graph, $r$ is a root in $\vec G$ and the support of $\vec{G}-r$ is a clique. Note that $\vec{\mathcal{G}}$ is stable under the following
operations:
\begin{itemize}
\item deletion of arcs incident to $r$:
  $\forall u, (\vec{G}-ru,r) \in \vec{\mathcal{G}}$
\item contraction of arcs $ru$ incident to $r$: let $\vec{G'}$ be the graph
  obtained from $\vec{G}$ by removing $r$ and $u$ and replacing them by a
  vertex $w$ with arcs $wx$ for all $x \in N^+(r) \cup N^+(u)$. Then
  $(\vec{G'},w) \in \vec{\mathcal{G}}$
\end{itemize}

We proceed by induction. Let $(\vec{G},r) \in \vec{\mathcal{G}}$. The arcs
ending in $r$ do not belong to any arborescence, so by \cref{obs:arc_deletion}, we assume without loss of
generality that $N^-(r) = \emptyset$. If $r$ has
only one outneighbour $u$, then the arc $ru$ is contained in every arborescence
of $\vec{G}$ rooted in $r$. Hence, by \cref{lem:arc_contraction}, $\FD{\vec{G}}{r}$ is
isomorphic to $\FD{\vec{G}-r}{u}$ and by induction both admit a Hamiltonian path
or are empty.

If $r$ has at least two outneighbours $u$ and $v$, then they are connected by an
arc, say $u \to v$, because the support of $\vec{G}-r$ is a clique. The vertices $u$ and
$v$ might also be connected by the arc $v \to u$, but we do not care at this
stage of the proof. We choose $u$ and $v$ as follows: among all pairs
$(u,v) \in N^+(r)^2$ with $uv \in \vec G$, we choose one such that $u$ has an
outneighbourhood of maximal size.

The set of arborescences of $\vec{G}$ can be partitioned into four \defin{types}
of arborescences: those that do not contain the arc $e=ru$, those that contain
$ru$ and $f=rv$, those that contain $ru$ and $g=uv$ and those that $ru$ but
neither $rv$ nor $rv$. These types partition $\FD{\vec{G}}{r}$ into four
subsets, that we will denote $\T_{-e}$, $\T_{/e/f}$, $\T_{/e/g}$ and
$\T_{-f-g/e}$ respectively.

\subsection{Structure of each type}
\begin{lemma}\label{lem:structure_types}
  If the flip graph of $\vec G$ rooted in $r$ is non-empty, the different types
  of arborescences induce the following structures in $\FD{\vec{G}}{r}$:
  \begin{itemize}
  \item $\FD{\vec{G}}{r}[\T_{-e}]$ is empty or contains a Hamiltonian path $P_{-e}$,
  \item $\FD{\vec{G}}{r}[\T_{-f-g/e}]$ is empty or contains a Hamiltonian path $P_{-f-g/e}$, 
  \item $\FD{\vec{G}}{r}[\T_{/e/f} \cup \T_{/e/g}]$ contains a spanning ladder.
  \end{itemize}
\end{lemma}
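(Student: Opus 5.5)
We treat the three items separately. Throughout we use induction: \cref{thm:directed} is assumed for all pairs in $\vec{\mathcal{G}}$ with fewer vertices, or with the same number of vertices and fewer arcs.

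\emph{First item.} By \cref{obs:arc_deletion}, $\FD{\vec G}{r}[\T_{-e}]$ is isomorphic to $\FD{\vec G - ru}{r}$. The pair $(\vec G-ru,r)$ lies in $\vec{\mathcal{G}}$ and has one arc fewer. By induction its flip graph is empty or has a Hamiltonian path, which gives $P_{-e}$.

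\emph{Second item.} Arborescences in $\T_{-f-g/e}$ are exactly the arborescences of $\vec G-rv-uv$ that contain $ru$. Set $H=\vec G - rv - uv$. Then $\T_{-f-g/e}$ induces $\FD{H}{r}[\T_{/ru}]$, again by \cref{obs:arc_deletion}. We contract $ru$ into a new root $r'$ to get $H/ru$. This is a pair in $\vec{\mathcal{G}}$ (stability under deletion and contraction of root arcs) with one vertex fewer. By induction, $\FD{H/ru}{r'}$ is empty or has a Hamiltonian path starting at some arborescence $A'$. \Cref{lem:arc_contraction} then lifts this to a Hamiltonian path of $\FD{H}{r}[\T_{/ru}]$, starting from any lift of $A'$. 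If $\FD{H/ru}{r'}$ is empty, then $\T_{-f-g/e}$ is empty as well.

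\emph{Third item.} An arborescence in $\T_{/e/f}\cup\T_{/e/g}$ contains $ru$ and exactly one of $rv$ and $uv$. Flipping between $rv$ and $uv$ is always allowed, since neither $r$ nor $u$ is a descendant of $v$ when $ru$ is present. This gives a perfect matching $\T_{/e/f}\leftrightarrow\T_{/e/g}$: swap $rv$ for $uv$. These are the rungs of the ladder.

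For the rails, we contract both $ru$ and the arc into $v$ into the root. Precisely, let $\vec G''$ be obtained from $\vec G$ by contracting $r,u,v$ into a single root $r''$ whose outneighbourhood is $N^+(r)\cup N^+(u)\cup N^+(v)$. This graph lies in $\vec{\mathcal{G}}$. Arborescences in $\T_{/e/f}$ map to arborescences of $\vec G''$, and the same holds for $\T_{/e/g}$. A flip of an arc not entering $v$ in $\vec G$ corresponds to a flip in $\vec G''$, as in the proof of \cref{lem:arc_contraction}.

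The difficulty is multiplicity. As in that proof, one tree of $\vec G''$ has a hypercube of preimages, indexed by the choice among $r,u,v$ as tail for each child of $r''$. This does not directly give a ladder. To handle it, I would perform the contraction in two steps and apply \cref{lem:arc_contraction} twice. First contract $ru$ on the graph $\vec G-rv$ or $\vec G - uv$. Then contract the arc into $v$. This produces Hamiltonian paths $Q_f$ on $\T_{/e/f}$ and $Q_g$ on $\T_{/e/g}$.

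The remaining problem is to make $Q_f$ and $Q_g$ match under the rung bijection, i.e.\ to get two parallel rails. Here the choice of $u$ with maximal outneighbourhood should be used. Consider any $x\in N^+(v)$ which is also an outneighbour of $r$. When $v$ hangs from $r$, the tail of $x$ may be $r$, $u$ or $v$; when $v$ hangs from $u$, the same choices are available. So the local structure is identical on both sides, and the rung map $rv\leftrightarrow uv$ commutes with every other flip. Indeed, the allowability of a flip entering $y\neq v$ depends only on descendant sets, and $D(v)$ is unchanged by the swap because $r$ and $u$ both lie above $v$.

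Hence the rung map is a graph isomorphism between $\FD{\vec G}{r}[\T_{/e/f}]$ and $\FD{\vec G}{r}[\T_{/e/g}]$. Any Hamiltonian path $Q_f$, obtained by induction on $\vec G-uv$ with $ru,rv$ contracted, then gives its mirror $Q_g$, and the two together with the rungs form a spanning ladder. I expect this commutation check to be the main obstacle. In particular, I must verify that flips entering vertices in $D(v)$, with tails in $\{r,u\}$, behave identically on both sides.
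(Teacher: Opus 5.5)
Your proposal is correct and follows the paper's proof: $\T_{-e}$ is handled by arc deletion and induction, and $\T_{-f-g/e}$ by deleting $rv,uv$, contracting $ru$ and lifting with \cref{lem:arc_contraction}. The ladder comes from the isomorphism $\FD{\vec G}{r}[\T_{/e/f}]\cong\FD{\vec G}{r}[\T_{/e/g}]$ given by swapping $rv$ and $uv$, combined with a Hamiltonian path on $\T_{/e/f}$ obtained by contracting the root arcs, exactly as you do.

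The commutation check you flag is immediate: the swap replaces the same arc in every tree, so two trees differ in exactly one arc iff their images do. The maximal-outneighbourhood choice of $u$ plays no role here. You should also add, as the paper does via \cref{lem:completion}, that $\T_{/e/f}$ is nonempty.
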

\begin{proof}
  As a direct application of \cref{obs:arc_deletion}, we get that
  $\FD{\vec{G}}{r}[\T_{-e}]$ is isomorphic to $\FD{\vec{G}-e}{r}$, so by
  induction $\FD{\vec{G}}{r}[\T_{-e}]$ is empty or also admits a Hamiltonian
  path $P_{-e}$.

  Using \cref{obs:arc_deletion,lem:arc_contraction}, we get that
  $\FD{\vec{G}}{r}[\T_{-f-g/e}]$ is isomorphic to $\FD{\vec{H}}{w}$, where
  $\vec{H}$ is obtained from $\vec{G}$ by contracting $e$ into $w$ and deleting
  $wv$. By induction, $\FD{\vec{G}}{r}[\T_{-f-g/e}]$ is empty or admits a
  Hamiltonian path $P_{-f-g/e}$.

  Finally, note that $\FD{\vec{G}}{r}[\T_{/e/f}]$ and
  $\FD{\vec{G}}{r}[\T_{/e/g}]$ are isomorphic via flipping the arc $f$ into the
  arc $g$. They are also isomorphic to $\FD{\vec{H}}{w}$, where $H$ is obtained
  from $\vec{G}$ by contracting $e$ and $g$ into $w$. Moreover, note that
  $\FD{\vec{G}}{r}[\T_{/e/f} \cup \T_{/e/g}]$ is not empty, because $H$ admits an
  arborescence if and only $G$ does, by \cref{lem:completion} applied to the subtree of $G$ composed of the edges $e$ and $g$. Combining all these arguments with
  \cref{lem:arc_contraction}, we deduce that
  $\FD{\vec{G}}{r}[\T_{/e/f} \cup \T_{/e/g}]$ contains a spanning ladder (see
  \cref{fig:types}).

\begin{figure}[h!]
  \centering
  \begin{tikzpicture}
    \coordinate (a1) at (0,0);
    \coordinate (a2) at (-.5,1);
    \coordinate (a3) at (.5,3);
    \coordinate (a4) at (0,4);
    \path[draw,use Hobby shortcut] (a1) .. (a2) .. (a3) .. (a4);
    \node[draw, circle, inner sep=1pt, fill=black] at (a1) {};
    \node[draw, circle, inner sep=1pt, fill=black] at (a4) {};
    
    \foreach \i in {0,...,8}{
      \node[draw, circle, inner sep=1pt, fill=black] (ll\i) at ($(3,{\i*.5})$) {}; 
      \node[draw, circle, inner sep=1pt, fill=black] (lr\i) at ($(3.5,{\i*.5})$) {}; 
      \draw (ll\i) -- (lr\i);
    }
    \draw (ll0) -- (ll8) (lr0) -- (lr8);

    \coordinate (b1) at (6.5,0);
    \coordinate (b2) at (6,1);
    \coordinate (b3) at (7,3);
    \coordinate (b4) at (6.5,4);
    \path[draw,use Hobby shortcut] (b1) .. (b2) .. (b3) .. (b4);
    \node[draw, circle, inner sep=1pt, fill=black] at (b1) {};
    \node[draw, circle, inner sep=1pt, fill=black] at (b4) {};

    \node[below=.2cm] at (a1) {$P_{-e} \subset \FD{\vec{G}}{r}[\T_{-e}]$};
    \node[below=.2cm] at (b1) {$P_{-f-g/e} \subset \FD{\vec{G}}{r}[\T_{-f-g/e}]$};
    \node[above=.2cm] at (3.25,4) {$\FD{\vec{G}}{r}[\T_{/e/f}\cup \T_{/e/g}]$};
  \end{tikzpicture}
  \caption{Spanning structures within the different types of arborescences.}
  \label{fig:types}
\end{figure}
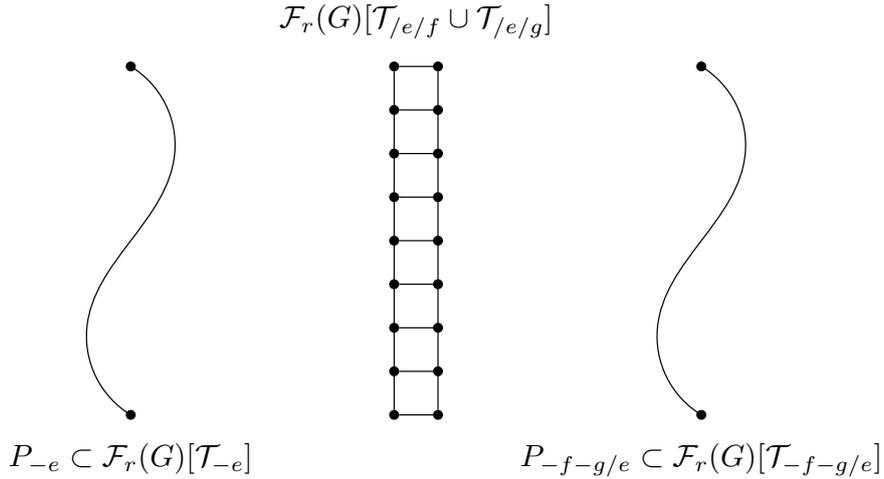
\end{proof}

\subsection{Assembling the pieces}

\begin{proof}[Proof of \cref{thm:directed}]
  We first argue that one can assume without loss of generality that
  $\T_{-f-g/e}$ and $\T_{-e}$ are non-empty. If both are empty, then by
  \cref{lem:structure_types}
  $\FD{\vec G}r = \FD{\vec{G}}{r}[\T_{/e/f} \cup \T_{/e/g}]$ contains a spanning
  ladder and \cref{thm:directed} follows directly from \cref{lem:ladder}. If
  $\T_{-f-g/e}$ is empty and $\T_{-e}$ is not, then $\FD{\vec{G}}{r}[\T_{-e}]$
  contains a Hamiltonian path $P_{-e}$ by \cref{lem:structure_types}. By
  flipping in $e$ in one of the extremities of $P_{-e}$, one obtains an
  arborescence in $\T_{/e/f} \cup \T_{/e/g}$. By \cref{lem:structure_types}
  $\FD{\vec{G}}{r}[\T_{/e/f} \cup \T_{/e/g}]$ contains a ladder, and hence
  contains a Hamiltonian cycle, which can be used to extend $P_{-e}$ into a
  Hamiltonian path of $\FD{\vec G}r$. Similarly, we can assume that $\T_{-e}$ is
  non-empty.

  For $A$ in $\T_{-f-g/e}$, let $A'$ and $A''$ be the arborescences in
  $\T_{/e/f}$ (respectively $\T_{/e/g}$) obtained from $A$ by flipping in $f$
  (respectively $g$). 

  Assume for now that the extremities $A_1$ and $A_2$ of $P_{-f-g/e}$ have
  distinct $A_i'$. By \cref{lem:structure_types}, $\FD{\vec{G}}{r}[\T_{/e/f} \cup \T_{/e/g}]$ contains a ladder, so by \cref{lem:ladder},
  it contains a
  Hamiltonian path going from $A_1'$ to $A_2'$ or $A_2''$. Along with
  $P_{-f-g/e}$ (obtained from \cref{lem:structure_types}), one obtains a
  Hamiltonian cycle of
  $\FD{\vec{G}}{r}[\T_{/e/f} \cup \T_{/e/g} \cup \T_{-f-g/e}]$. Finally, all
  arborescences in $\T_{-e}$ are adjacent to some arborescence in
  $\T_{/e/f} \cup \T_{/e/g} \cup \T_{-f-g/e}$ by flipping in the edge $e$, so
  $\FD{\vec{G}}{r}$ contains a Hamiltonian path starting in $\T_{-e}$ (see
  \cref{fig:easy_case}).

  \begin{figure}[h!]
    \centering
    \begin{tikzpicture}
      \coordinate (a1) at (0,0);
      \coordinate (a2) at (-.5,1);
      \coordinate (a3) at (.5,3);
      \coordinate (a4) at (0,4);
      \path[draw, very thick, red, use Hobby shortcut] (a1) .. (a2) .. (a3) .. (a4);
      
      \foreach \i in {0,...,8}{
        \coordinate (ll\i) at ($(3,{\i*.5})$); 
        \coordinate (lr\i) at ($(3.5,{\i*.5})$); 
        \draw (ll\i) -- (lr\i);
      }
      \draw (ll0) -- (ll8) (lr0) -- (lr8);

      \coordinate (b1) at (6.5,0);
      \coordinate (b2) at (6,1);
      \coordinate (b3) at (7,3);
      \coordinate (b4) at (6.5,4);
      \path[draw, very thick, red, use Hobby shortcut] (b1) .. (b2) .. (b3) .. (b4);
      
      \draw[very thick, red] (b1) -- (lr1);
      \draw[very thick, red] (b4) -- (lr6);
      \draw[very thick, red] (lr1) -- (lr0) -- (ll0) -- (ll2) -- (lr2) -- (lr3)
      -- (ll3) -- (ll4) -- (lr4) -- (lr5) -- (ll5) -- (ll8) -- (lr8) -- (lr6); 
      \draw[very thick, red] (a1) -- (1.5,.5);
      
      \node[draw, circle, inner sep=1pt, fill=black] at (a1) {};
      \node[draw, circle, inner sep=1pt, fill=black] at (a4) {};
      \node[draw, circle, inner sep=1pt, fill=black] at (b1) {};
      \node[draw, circle, inner sep=1pt, fill=black] at (b4) {};
      \foreach \i in {0,...,8}{
        \node[vertex] at (ll\i){};
        \node[vertex] at (lr\i){};
      }
      \node[right] at (b1) {$A_1$};
      \node[right] at (b4) {$A_2$};
      
      \node[below=.2cm] at (a1) {$P_{-e} \subset \FD{\vec{G}}{r}[\T_{-e}]$};
      \node[below=.2cm] at (b1) {$P_{-f-g/e} \subset \FD{\vec{G}}{r}[\T_{-f-g/e}]$};
      \node[above=.2cm] at (3.25,4) {$\FD{\vec{G}}{r}[\T_{/e/f}\cup \T_{/e/g}]$};
    \end{tikzpicture}
    \caption{Hamiltonian path if $A_1' \neq A_2'$.}
    \label{fig:easy_case}
  \end{figure}
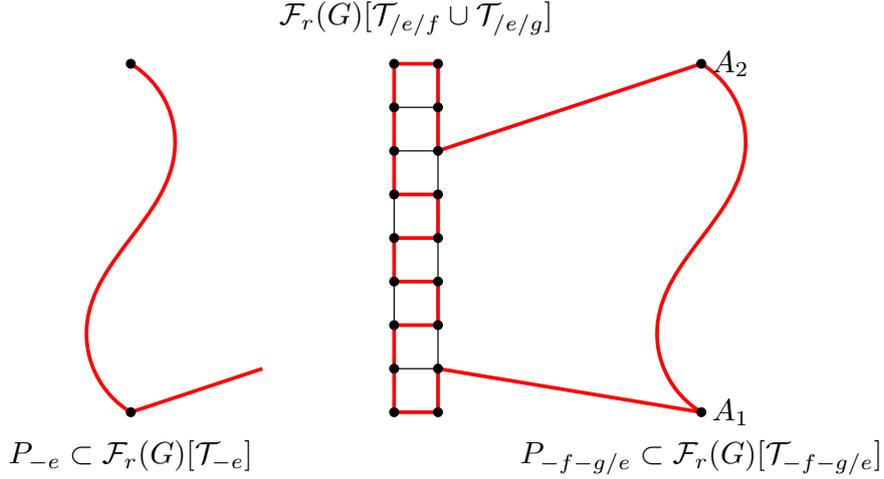

  Thus we can assume without loss of generality that the extremities $A_1$ and
  $A_2$ of $P_{-f-g/e}$ have $A_1'= A_2'$ (and $A_1'' = A_2 ''$). As $A_i'$ is obtained from $A_i$ by flipping in the arc $f$, this implies that $A_1$ and
  $A_2$ only differ on one arc entering $v$, so the path $P_{-f-g/e}$ from
  \cref{lem:structure_types} is in fact a Hamiltonian cycle of
  $\FD{\vec{G}}{r}[\T_{-f-g/e}]$. Either $A' = A_1'$ for every arborescence $A$ in
  $\T_{-f-g/e}$ or there are two arborescences $A$ and $B$, consecutive on
  $P_{-f-g/e}$, such that $A' \neq B'$. In the second case,
  $\FD{\vec{G}}{r}[\T_{-f-g/e}]$ contains a Hamiltonian path whose extremities
  are $A$ and $B$, so we are back in the case of the previous paragraph.

  Thus we can assume that for every $A \in \T_{-f-g/e}$, we have $A' = A_1'$ and
  $A''=A_1''$. We now characterise the structure of tournaments for which this is possible:

\begin{lemma}\label{lem:flip_clique}
  Let $\vec{G}$ be a directed graph rooted in a vertex $w$, such that the support of
  $\vec{G} -w$ is a clique. Let $v \in V(\vec{G})$ such that
  flipping in $wv$ in any arborescence rooted in $w$ that does not contain $wv$,
  results in the same arborescence $A$. Then the following hold:
  \begin{enumerate}[label=(\arabic*),ref=(\arabic*)]
  \item\label{it:flip_clique1} $\FD{\vec{G} -wv}w$ is a clique,
  \item\label{it:flip_clique2} all vertices other $v$ have a single path from $w$ to them in
    $\vec G - wv$,
  \item\label{it:flip_clique3} all paths going from $w$ to $v$ in $\vec G - wv$ have length at least 2,
  \item\label{it:flip_clique4} $\vec G -wv$ has the following structure:
    \begin{enumerate}[label=(\alph*),ref=(4\alph*)]
    \item\label{it:flip_clique4a} If $\FD{\vec{G}-wv}{w}$ is a single vertex,
      then $\vec G - wv$ is built on the directed path $w, v_1, \dots,v_n$, with
      $v = v_k$ for some fixed $k>1$. In other words, $\vec G$ contains the
      graph $L_{k,n}$ (drawn on \cref{fig:flip_Lkn} in black) as a subgraph, and
      the arcs in $E(G) \setminus E(L_{k,n})$ are either ending at $w$ or of the
      form $v_{i+1} \to v_i$ (drawn on \cref{fig:flip_Lkn} in red).
    \item\label{it:flip_clique4b} If $\FD{\vec{G}-wv}{w}$ is a clique on at
      least two vertices, then $\vec{G} -wv$ is built on the directed path
      $w,v_1, \dots,v_n$ with an additional vertex $v$ and all arcs $v_iv$ with
      $i\ge k-1$ for some fixed $1 < k \le n$. In other words, $\vec G$ contains
      the graph $M_{k,n}$ (drawn on \cref{fig:flip_Mkn} in black) as a subgraph, and the
      arcs in $E(G) \setminus E(M_{k,n})$ are either $v \to v_{k-1}$, or ending
      at $w$, or of the form $v_{i+1} \to v_i$ (drawn on \cref{fig:flip_Mkn} in red).
    \end{enumerate}
  \end{enumerate}
\end{lemma}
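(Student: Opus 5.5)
Write $G_0 := \vec G - wv$. The hypothesis says there is a single arborescence $A$ (necessarily containing $wv$) such that, for every arborescence $B$ of $G_0$, flipping in $wv$ in $B$ gives $A$. Flipping in $wv$ only removes the unique arc $xv$ of $B$ entering $v$ and keeps everything else. So every arborescence $B$ of $G_0$ is determined by $A$ together with one in-arc $x_Bv$: we have $B = A - wv + x_Bv$. The plan is to read off items \ref{it:flip_clique1}--\ref{it:flip_clique3} from this description, and then use the clique condition on the support to pin down the structure in item \ref{it:flip_clique4}.

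\textbf{Items \ref{it:flip_clique1} and \ref{it:flip_clique3}.} Any two arborescences of $G_0$ agree outside the arc entering $v$, so they differ by one arc flip, which gives \ref{it:flip_clique1}. A path of length one from $w$ to $v$ in $G_0$ would be the arc $wv$, which was deleted; this gives \ref{it:flip_clique3}.

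\textbf{Item \ref{it:flip_clique2}.} Let $T := A - v$, restricted to the vertices other than $v$. Every arborescence of $G_0$ restricts to $T$ on these vertices. Suppose some vertex $y \neq v$ had two distinct $w$--$y$ paths in $G_0$.
\begin{itemize}
\item If one of them avoids $v$, I would take that path, extend it by \cref{lem:completion}, and obtain an arborescence of $G_0$ whose path to $y$ differs from the one in $T$, a contradiction. This needs $G_0$ to have an arborescence at all; if it has none, the hypothesis is vacuous and that degenerate case has to be handled or excluded separately.
\item If both paths pass through $v$, then the arcs after $v$ must lie in $A$ (since $A - wv + xv$ is an arborescence), so any branching happens before $v$. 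Two different ways of reaching $v$ are allowed; what we must rule out is two different ways of reaching a vertex that does not pass through $v$.
\end{itemize}
A cleaner route is to note that any arc $xy$ of $G_0$ with $y \neq v$ and $xy \notin A$ could be flipped in to give a different restriction, unless $x$ is a descendant of $y$. Hence $G_0 - \{\text{in-arcs of } v\}$ is built on $T$ (plus the arcs entering $v$), and this yields uniqueness of the paths.

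\textbf{Item \ref{it:flip_clique4}.} Now use that the support of $\vec G - w$ is a clique. Every pair of vertices of $T - w$ other than $v$ is joined by an arc, and every arc not in $T$ is a backedge. I expect this to force $T - w$ to be a directed path $v_1, \dots, v_n$: if $T$ branched, two vertices $a, b$ in incomparable branches would be joined by an arc, and that arc is neither a tree arc nor a backedge (neither is a descendant of the other). Hence all non-tree arcs among the $v_i$ go backwards, from $v_j$ to $v_i$ with $j > i$. The clique condition only forces an arc $v_{i+1}v_i$ or $v_iv_{i+1}$ between consecutive vertices, and $v_iv_{i+1}$ is already the tree arc.

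Next place $v$. The in-neighbours of $v$ in $G_0$ are exactly the $x_B$, and the out-arcs of $v$ must be backedges with respect to the possible parents.
\begin{itemize}
\item \textbf{Case \ref{it:flip_clique4a}}, a single arborescence. Here $v$ has a unique parent, so $v$ lies on the path, $v = v_k$ with $k > 1$ by \ref{it:flip_clique3}, and the other arcs are backedges. Backward arcs $v_jv_i$ with $j > i+1$ should then be excluded; I would recheck exactly what ``built on'' permits here.
\item \textbf{Case \ref{it:flip_clique4b}}, at least two arborescences. Here $v$ is a leaf with several in-neighbours $v_i$. Every $v_j$ must be adjacent to $v$ in the support. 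An arc $vv_j$ can only appear if $v_j$ is a descendant of $v$ in every arborescence, which is impossible since $v$ is a leaf. The one exception is an arc $vv_{k-1}$, which is harmless only if it can never be used. Together with the clique condition this forces the in-neighbours of $v$ to form a suffix $v_{k-1}, \dots, v_n$, with $k > 1$ and $k \le n$ because there are at least two of them.
\end{itemize}

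\textbf{Main obstacle.} The delicate step is the exact placement of $v$ and of the exceptional arc $v \to v_{k-1}$ in case \ref{it:flip_clique4b}. An arc $vv_j$ with $j < k-1$ appears usable, since $v_j$ would have a second path, while for $j = k-1$ it seems usable as well. So I would carefully recheck the descendant condition, ensuring that the indexing matches $M_{k,n}$.
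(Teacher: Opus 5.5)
\paragraph{Gap in item (2).} Your argument for item (2) never uses that the support of $\vec G - w$ is a clique, and item (2) is false without that hypothesis. Take the arcs $wa, wb, wv, av, bv, vy$. Both arborescences of $\vec G - wv$ become $\{wa, wb, wv, vy\}$ after flipping in $wv$, yet $y$ has the two paths $w,a,v,y$ and $w,b,v,y$. Your ``cleaner route'' condition (every arc outside the tree that does not enter $v$ is a backedge) holds vacuously here, so it cannot yield uniqueness. Your second bullet even calls branching before $v$ ``allowed'', which is exactly what item (2) forbids for proper descendants of $v$.

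The missing fact is that if $\vec G - wv$ has at least two arborescences, then $v$ is a leaf of $A$; in (4b) you only assert this. The paper gets it from a BFS from $w$. A vertex $x \neq v$ cannot have two in-neighbours of depth at most that of $x$, since the two shortest paths complete, by \cref{lem:completion}, to arborescences with different arcs into $x$. By complete support, every layer therefore has at most one vertex besides $v$. A layer shared by $v$ and $v_k$ forces $v_k \to v$, and no arc $v \to v_j$ with $j \ge k$ can exist.

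Within your framework you could instead apply the backedge property to every arborescence $B$ of $\vec G - wv$, not to $T = A - v$, which is a forest when $v$ has children. Then $V \setminus \{w,v\}$ is a chain in each $B$. If $v$ had a child $c$ and two admissible parents $x \neq x'$, the chain property for $c$ in the two corresponding arborescences would force $x'$ to be a proper ancestor of $x$ and vice versa, a contradiction.

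\paragraph{Gap in item (4).} Your derivation breaks exactly where you flag the obstacle, because in (4b) the descendant condition is reversed. An arc $v \to v_j$ outside the arborescence is harmless iff $v$ is a descendant of $v_j$, as you correctly wrote for item (2). This holds precisely when all admissible parents of $v$ lie below $v_j$, i.e.\ for $j \le k-1$, where $v_{k-1}$ is the in-neighbour of $v$ of smallest index. Such an arc can never lie on a path from $w$, because every $w$--$v$ path already visits $v_j$; so there is no second path to $v_j$.

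For $j \ge k$ the arc $v \to v_j$ is not a backedge, so complete support forces $v_j \to v$. This is what produces the suffix $v_{k-1}, \dots, v_n$ and the mandatory arcs $v \to v_i$, $i < k-1$, of $M_{k,n}$. With your reversed condition no arc $v \to v_j$ could exist, and you would always get $k = 2$.

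Similarly, in (4a) the long backward arcs $v_jv_i$ with $j > i+1$ are not excluded but forced: the forward arc would be neither a tree arc nor a backedge. Also, ``unique parent, hence $v$ lies on the path'' needs an argument. If $v$ hung off $v_j$ with $j < n$, the arc between $v$ and $v_n$ would be either a second admissible parent of $v$ or a flippable arc into $v_n$.
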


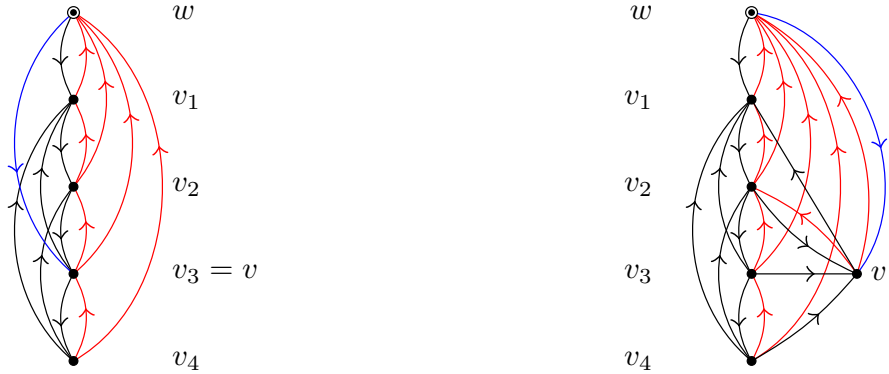
\begin{figure}[h!]
  \centering
  \begin{subfigure}{.4\textwidth}
    \centering
    \begin{tikzpicture}[decoration={
        markings,
        mark=at position 0.6 with {\arrow[scale=1.5]{>}}}
      ]
      
      \foreach \i in {1,...,5}{
        \coordinate (v\i) at ($(0,-\i)$); 
      }

      \foreach \i in {1,...,4}{
        \pgfmathsetmacro{\j}{\i+1}
        \draw[postaction={decorate}, bend right=30,red]  (v\j) to (v\i);
        \draw[postaction={decorate}, bend right=30]  (v\i) to (v\j);
      }
      \draw[postaction={decorate}, bend right=60,red]  (v5) to (v1);
      \draw[postaction={decorate}, bend right=50,red]  (v4) to (v1);
      \draw[postaction={decorate}, bend right=40,red]  (v3) to (v1);

      \draw[postaction={decorate}, bend left=40]  (v5) to (v3);
      \draw[postaction={decorate}, bend left=50]  (v5) to (v2);      
      \draw[postaction={decorate}, bend left=40]  (v4) to (v2);

      \draw[postaction={decorate}, bend right=50, blue]  (v1) to (v4);

      \foreach \i in {2,...,5}{
        \node[vertex] at (v\i){};
      }
      \node[vertex,double] at (v1){};
      
      \node[right = 1cm of v1] {$w$};
      \node[right = 1cm of v2] {$v_1$};
      \node[right = 1cm of v3] {$v_2$};
      \node[right = 1cm of v4] {$v_3=v$};
      \node[right = 1cm of v5] {$v_4$};
    \end{tikzpicture}
    \caption{Illustration of \ref{it:flip_clique4a} with ${n=4}$ and $k=3$}\label{fig:flip_Lkn}
  \end{subfigure}
  \hfill
  \begin{subfigure}{.4\textwidth}
    \centering
    \begin{tikzpicture}[decoration={
        markings,
        mark=at position 0.6 with {\arrow[scale=1.5]{>}}}
      ]

        \foreach \i in {1,...,5}{
        \coordinate (v\i) at ($(0,-\i)$); 
      }

      \foreach \i in {1,...,4}{
        \pgfmathsetmacro{\j}{\i+1}
        \draw[postaction={decorate}, bend right=30,red]  (v\j) to (v\i);
        \draw[postaction={decorate}, bend right=30]  (v\i) to (v\j);
      }
      \draw[postaction={decorate}, bend right=60,red]  (v5) to (v1);
      \draw[postaction={decorate}, bend right=50,red]  (v4) to (v1);
      \draw[postaction={decorate}, bend right=40,red]  (v3) to (v1);

      \draw[postaction={decorate}, bend left=40]  (v5) to (v3);
      \draw[postaction={decorate}, bend left=50]  (v5) to (v2);      
      \draw[postaction={decorate}, bend left=40]  (v4) to (v2);

      \coordinate (v) at ($(v4)+(1.2,0)$); 

      \draw[postaction={decorate}, bend left=60, blue]  (v1) to (v);

      \draw[postaction={decorate}, bend right=15]  (v3) to (v);
      \draw[postaction={decorate}, bend right=15,red]  (v) to (v3);
      \draw[postaction={decorate}]  (v4) to (v);
      \draw[postaction={decorate}, bend right=10]  (v5) to (v);
      \draw[postaction={decorate}]  (v) to (v2);
      \draw[postaction={decorate}, bend right=40,red]  (v) to (v1);

      \foreach \i in {2,...,5}{
        \node[vertex] at (v\i){};
      }
      \node[vertex] at (v) {};
      \node[vertex,double] at (v1) {};

      \node[left = 1cm of v1] {$w$};
      \node[left = 1cm of v2] {$v_1$};
      \node[left = 1cm of v3] {$v_2$};
      \node[left = 1cm of v4] {$v_3$};
      \node[left = 1cm of v5] {$v_4$};
      
      \node[right = 1pt of v] {$v$};
    \end{tikzpicture}

    \caption{Illustration of \ref{it:flip_clique4b} with ${n=4}$ and $k=3$}\label{fig:flip_Mkn}
  \end{subfigure}
  \caption{The structure of tournaments $\vec G$, such that all arborescences of $\vec G
    -wv$ rooted in $w$ produce the same tree when flipping in $wv$. The arc $wv$
  is drawn in blue, the black arcs are present in $\vec G$ while the red arcs
  are optional.}
  \label{fig:flip_clique}
\end{figure}

\begin{proof}[Proof of \cref{lem:flip_clique}]
  All arborescences of $\vec G -wv$ rooted in $w$ have in common all arcs but
  the one entering $v$ because they result in the same arborescence when flipping in
  $wv$. Thus $\FD{\vec G -wv }w$ is a clique, which proves \ref{it:flip_clique1}.
  
  Perform a BFS from $w$ in $\vec G -wv$. If each layer of the BFS contains only
  one vertex, then $\vec{G}-{wv}$ consists of a directed path $w,v_1, \dots, v_n$
  and some backedges, in other words $\vec G-wv$ is built on a directed path. In
  fact, $\vec G$ contains all arcs $v_iv_j$ with $i>j+1$ because $\vec G
  -w$ has complete support (see \cref{fig:flip_Lkn}). The additional possible
  backedges are either ending at $w$, or arcs of the form $v_{i+1} \to v_i$,
  which proves \ref{it:flip_clique4a}. Moreover, note that $\vec G -wv$
  has only one arborescence rooted in $w$.
  
  If some layer of the BFS contains more than one vertex, assume towards
  contradiction that some vertex $x$ different from $v$ has two inneighbours
  with depth no larger than that of $x$ (i.e. either in the same layer or the
  previous one). Thus, there are at least two paths from $w$ to $x$ in
  $\vec G -wv$. By \cref{lem:completion}, each of these paths can be completed
  into an arborescence of $\vec G -wv$ rooted in $w$. These two arborescences
  have different arc entering $x$, hence flipping in $wv$ results in different
  arborescences, a contradiction. So all layers are either reduced to one single
  vertex, or contain two vertices $v_k$ and $v$ and $v_k \notin N^{+}_G(v)$. Hence all
  vertices but $v$ have at most one inneighbour at smaller depth in the BFS. Let
  $k$ be the depth of $v$. All other vertices at depth at least $k$ are
  inneighbours of $v$ because $\vec G$ has complete support. Hence $\vec G -wv$
  contains as a spanning subgraph the path $w,v_1 , \dots,v_n$ with one
  additional vertex $v$ such that $v_i\to v$ if $i \ge k-1$. In addition, since
  $\vec G - w$ has complete support, $\vec G-w$ also contains all backedges of
  the form $v_j \to v_i$ with $i > j+1$, or $v \to v_i$ with $i < k-1$. In other
  words, the graph $M_{k,n}$ drawn in black on \cref{fig:flip_Mkn} is a subgraph of
  $G-wv$. Finally, $\vec G$ can also contain other backedges, namely
  $v \to v_{k-1}$, the arcs ending at $w$, or the arcs of the form
  $v_{i+1} \to v_i$, which proves \ref{it:flip_clique4b}.

  Note that in both cases, $v \neq v_1$ because $wv \notin \vec G -wv$, so
  $k > 1$, which implies \ref{it:flip_clique3}. Moreover, as backedges by
  definition never belong to a path from the root to a vertex, all vertices but
  $v$ have only one path from $w$ to them in $\vec G -wv$, which proves
  \ref{it:flip_clique2}.
\end{proof}

  By \cref{lem:flip_clique} applied to $\vec G/e$ with $w$ being
  the contraction of $ru$ and $v:= v$, this implies that $\vec G/e$
  has the structure described in \ref{it:flip_clique4} for some $k$ and $n$ and
  that $\FD{\vec{G}}{r}[\T_{-f-g/e}]$ is a clique (by \ref{it:flip_clique1}).
  
  For every $B \in \FD{\vec{G}}{r}[\T_{-e}]$, let $\tilde B$ be the arborescence of
  $\FD{\vec{G}}{r}[\T_{/e/f} \cup \T_{/e/g} \cup \T_{-f-g/e}]$ obtained by
  flipping in $e$. Let $B_1$ and $B_2$ be the extremities of $P_{-e}$. We now do a
  case analysis on the types of $\tilde B_1$ and $\tilde B_2$.

  \subsubsection{$\tilde B_1$ or $\tilde B_2$ belongs to
    $\FD{\vec{G}}{r}[\T_{-f-g/e}]$.} Then $\FD{\vec{G}}{r}$ contains a
  Hamiltonian path that first visits $\FD{\vec{G}}{r}[\T_{-e}]$ via $P_{-e}$,
  then all the vertices of $\FD{\vec{G}}{r}[\T_{-f-g/e}]$ and finally those of
  $\FD{\vec{G}}{r}[\T_{/e/f} \cup \T_{/e/g}]$ (see \cref{fig:case1}).

  \begin{figure}[h!]
    \centering
    \begin{tikzpicture}
      \coordinate (a1) at (0,0);
      \coordinate (a2) at (-.5,1);
      \coordinate (a3) at (.5,3);
      \coordinate (a4) at (0,4);
      \path[draw, very thick, red, use Hobby shortcut] (a1) .. (a2) .. (a3) .. (a4);
      
      \foreach \i in {0,...,8}{
        \coordinate (ll\i) at ($(3,{\i*.5})$); 
        \coordinate (lr\i) at ($(3.5,{\i*.5})$); 
        \draw (ll\i) -- (lr\i);
      }
      \draw (ll0) -- (ll8) (lr0) -- (lr8);

      \coordinate (b1) at (7,2);
      \coordinate (b2) at (6.5,2);
      \coordinate (b3) at (7,2.5);
      \coordinate (b4) at (6.5,2.5);
      \draw[very thick, red]  (b1) -- (b2) -- (b3) -- (b4);
      \draw (b3) -- (b1) -- (b4) -- (b2);

      \draw[very thick, red] (b4) -- (lr6);
      \draw[very thick, red] (lr6) -- (lr8) -- (ll8) -- (ll0) -- (lr0) -- (lr5);
      \draw[very thick, red] (a1) edge[bend right= 40] (b1);
      
      \node[draw, circle, inner sep=1pt, fill=black] at (a1) {};
      \node[draw, circle, inner sep=1pt, fill=black] at (a4) {};
      \node[draw, circle, inner sep=1pt, fill=black] at (b1) {};
      \node[draw, circle, inner sep=1pt, fill=black] at (b4) {};
      \foreach \i in {0,...,8}{
        \node[vertex] at (ll\i){};
        \node[vertex] at (lr\i){};
      }

      \node[left] at (a1) {$B_1$};
      \node[right] at (b1) {$\tilde B_1$};

      \node[below=.2cm] at (a1) {$P_{-e} \subset \FD{\vec{G}}{r}[\T_{-e}]$};
      \node[above=.2cm] at (b3) {$\FD{\vec{G}}{r}[\T_{-f-g/e}]$};
      \node[above=.2cm] at (3.25,4) {$\FD{\vec{G}}{r}[\T_{/e/f}\cup \T_{/e/g}]$};
    \end{tikzpicture}
    \caption{Hamiltonian path if $\tilde B_1 \in \FD{\vec{G}}{r}[\T_{-f-g/e}]$.}
    \label{fig:case1}
  \end{figure}

  \subsubsection{$\tilde B_1$ or $\tilde B_2$ belongs to
    $\FD{\vec{G}}{r}[\T_{/e/f} \cup \T_{/e/g}] \setminus
    \{A',A''\}$.}\label{subsubsec:1}

  Without loss of generality, assume that $\tilde B_1$ belongs to
  $\FD{\vec{G}}{r}[\T_{/e/f} \cup \T_{/e/g}] \setminus \{A',A''\}$. By
  \cref{lem:ladder}, $\FD{\vec{G}}{r}[\T_{/e/f} \cup \T_{/e/g}]$ contains a
  Hamiltonian path starting at $\tilde B_1$ and ending at $A'$ or $A''$. As a
  result, $\FD{\vec{G}}{r}$ contains a Hamiltonian path visiting first
  $P_{-e}$, then $\FD{\vec{G}}{r}[\T_{/e/f} \cup \T_{/e/g}]$ and finally
  $\FD{\vec{G}}{r}[\T_{-f-g/e}]$ (see \cref{fig:case2}).
  \begin{figure}[h!]
    \centering
    \begin{tikzpicture}
      \coordinate (a1) at (0,0);
      \coordinate (a2) at (-.5,1);
      \coordinate (a3) at (.5,3);
      \coordinate (a4) at (0,4);
      \path[draw, very thick, red, use Hobby shortcut] (a1) .. (a2) .. (a3) .. (a4);
      
      \foreach \i in {0,...,8}{
        \coordinate (ll\i) at ($(3,{\i*.5})$); 
        \coordinate (lr\i) at ($(3.5,{\i*.5})$); 
        \draw (ll\i) -- (lr\i);
      }
      \draw (ll0) -- (ll8) (lr0) -- (lr8);

      \coordinate (b1) at (7,2);
      \coordinate (b2) at (6.5,2);
      \coordinate (b3) at (7,2.5);
      \coordinate (b4) at (6.5,2.5);
      \draw[very thick, red]  (b1) -- (b2) -- (b3) -- (b4);
      \draw (b3) -- (b1) -- (b4) -- (b2);

      \draw[very thick, red] (b4) -- (lr6);
      \draw[very thick, red] (ll2) -- (ll0) -- (lr0) -- (lr3)
      -- (ll3) -- (ll4) -- (lr4) -- (lr5) -- (ll5) -- (ll8) -- (lr8) -- (lr6); 
      \draw[very thick, red] (a1) edge[bend right= 40] (ll2);
      
      \node[draw, circle, inner sep=1pt, fill=black] at (a1) {};
      \node[draw, circle, inner sep=1pt, fill=black] at (a4) {};
      \node[draw, circle, inner sep=1pt, fill=black] at (b1) {};
      \node[draw, circle, inner sep=1pt, fill=black] at (b4) {};
      \foreach \i in {0,...,8}{
        \node[vertex] at (ll\i){};
        \node[vertex] at (lr\i){};
      }

      \node[left] at (a1) {$B_1$};
      \node[left] at (ll2) {$\tilde B_1$};
      \node[left] at (ll6) {$A'$};
      \node[right] at (lr6) {$A''$};

      \node[below=.2cm] at (a1) {$P_{-e} \subset \FD{\vec{G}}{r}[\T_{-e}]$};
      \node[above=.2cm] at (b3) {$\FD{\vec{G}}{r}[\T_{-f-g/e}]$};
      \node[above=.2cm] at (3.25,4) {$\FD{\vec{G}}{r}[\T_{/e/f}\cup \T_{/e/g}]$};
    \end{tikzpicture}
    \caption{Hamiltonian path if $\tilde B_1 \in \FD{\vec{G}}{r}[\T_{/e/f} \cup
      \T_{/e/g} \setminus \{A',A''\}]$.}
    \label{fig:case2}
  \end{figure}

  \subsubsection{$\tilde B_1$ and $\tilde B_2$ belong to $\{A',A''\}$.}\label{subsubsec:2}
  Recall that $\vec G/e$ is of the form described by \ref{it:flip_clique4}
  of \cref{lem:flip_clique} because $A'= A_1'$ and $A'' = A_1''$ for all
  $A \in \T_{-f-g/e}$. Recall also that $w$ is the vertex of $G/e$ resulting
  from the contraction of $r$ and $u$. We say that a vertex of $G$ has \defin{depth} $i$
  if it lies at distance $i$ from $w$ in $\vec G-f-g/e$ (that is  at distance
  $i$ from $\{r,u\}$ in $\vec G-f-g$). We label the vertices in
  $V(\vec G)\setminus\{r,u,v\}$ as in \cref{lem:flip_clique}: let $v_i$ be the
  only vertex of $V(\vec G-f-g/e) \setminus \{w,v\}$ at depth $i$ in the BFS
  starting from the root $w$ in $G-f-g/e$.

  \paragraph{Case 1: $\tilde B_1 \neq \tilde B_2$.}
  Without loss of generality, assume that $\tilde B_1 = A'$ and $\tilde B_2 = A''$.
  \begin{claim}\label{cl:out-neighbourhood1}
    The outneighbourhood of $r$ in $\vec G$ is composed of $u$ which has depth 0, $v_1$
    which has depth one, and $v$ which has depth at least two. 
  \end{claim}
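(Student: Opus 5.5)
The plan is to read off the structure of $\vec G/e$ given by \cref{lem:flip_clique}, applied with $w$ the contraction of $ru$. The structure bounds $N^+(r)$ from above, and the existence of $B_2$ forces $v_1$ into it.

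\textbf{Step 1: $N^+(r)\subseteq\{u,v,v_1\}$.} By construction $N^+_{\vec G/e}(w)\supseteq N^+(r)\setminus\{u\}$, and the arc $wv$ of $\vec G/e$ is the image of $f$ (and of $g$). By \ref{it:flip_clique4}, $\vec G-f-g/e$ is built on the path $w,v_1,\dots,v_n$, possibly with $v$ attached at depth $k\ge 2$. Hence every outneighbour of $w$ other than $v$ must be $v_1$: a backedge never leaves the root, and by \ref{it:flip_clique3} $v$ is not at depth $1$. So $N^+(r)\subseteq\{u,v,v_1\}$, and $u,v\in N^+(r)$ by the choice of the pair $(u,v)$. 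The depth claims then follow directly. The vertex $u$ has depth $0$ by definition. The vertex $v_1$ has depth $1$ because it is the unique vertex of layer $1$ of the BFS. The vertex $v$ has depth at least $2$ by \ref{it:flip_clique3}, since $f,g$ are removed.

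\textbf{Step 2: $v_1\in N^+(r)$.} Let $A$ be the common arborescence of $\T_{-f-g/e}$. By \ref{it:flip_clique2}, every vertex other than $v$ has a unique path from $w$ in $\vec G-f-g/e$, and this path avoids $v$. Hence $v$ is a leaf of $A$. Moreover, the arcs of $A$ are the path $w,v_1,\dots,v_n$ together with the single arc entering $v$. In $A''=A-\{\text{arc into } v\}+g$, the vertex $v$ hangs from $u$. Now $B_2\in\T_{-e}$ and flipping in $e$ yields $A''$, so $B_2=A''-e+yu$ for some $y$ that is not a descendant of $u$ in $A''$, with $y\neq r$.

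Suppose for contradiction that $r\not\to v_1$. Then the arc of $A''$ entering $v_1$ must be $uv_1$. Consequently every vertex of $A''$ other than $r$ lies below $u$: this holds for the $v_i$ via the path, and for $v$ via $g$. So no valid $y$ exists, contradicting the existence of $B_2$. Therefore $r\to v_1$, and this arc is the one used in $A''$. Combined with Step 1, this gives $N^+(r)=\{u,v_1,v\}$.

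\textbf{Main obstacle.} The delicate point is justifying that the arc entering $v_1$ in $A''$ (equivalently in $A$) is $uv_1$ when $r\not\to v_1$, and that $v$ is a leaf in $A$. Both require translating \ref{it:flip_clique2} and \ref{it:flip_clique4} back through the contraction via the map $\phi$ of \cref{lem:arc_contraction}. I would verify carefully that the backedges listed in \ref{it:flip_clique4b} (such as $v\to v_{k-1}$) never appear in $A$, using \cref{lem:built_on}. Note that the argument uses $B_2$ and not $B_1$: in $A'$ the vertex $v$ hangs from $r$, so $B_1=A'-e+vu$ could exist even without the arc $rv_1$.
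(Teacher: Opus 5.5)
Your proof is correct and is essentially the paper's. Step~1 is the same layer-one argument, and Step~2 reaches the same contradiction with $\tilde B_2=A''$ from the same underlying fact: when $N^+(r)=\{u,v\}$, no arborescence avoids both $e$ and $f$. The paper phrases this as ``every $B\in\T_{-e}$ contains $f$, so $\tilde B_2\in\T_{/e/f}$''; you phrase it as ``$A''$, which avoids $f$, cannot lose $e$''.

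Your detour through the shape of $A$ is unnecessary, since $f\notin A''$ already makes $u$ the only child of $r$ in $A''$. Also, your assertion that $v$ is a leaf of $A$ is false in case~\ref{it:flip_clique4a}, where $v=v_k$ lies on the path, although this does not affect your conclusion.
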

  \begin{poc}
    By definition, $u$ is an outneighbour of $r$ and has depth 0 (and thus
    corresponds to $w$ in \cref{fig:flip_clique}). By \ref{it:flip_clique3} of
    \cref{lem:flip_clique}, the depth of $v$ is at least two. On the other hand,
    other outneighbours of $r$ in $\vec G$ are outneighbours of $w$ in
    $\vec G-f-g/e$ and thus have depth one, so
    $\{u,v\} \subseteq N^+(r) \subseteq \{u,v_1,v\}$. If $v_1$ is not an
    outneighbour of $r$ in $\vec G$, then $r$ has outdegree 1 in $\vec{G}_{-e}$,
    so all arborescences in $\T_{-e}$ contain the edge $f=rv$. Hence
    $\tilde B_1$ and $\tilde B_2$ both belong to $\T_{/e/f}$ so
    $\tilde B_1 = \tilde B_2 = A'$ which is a contradiction.
  \end{poc}

  Assume that $uv_1 \notin E(\vec G)$. Then $v_1u \in E(G)$ because the support of $G-r$ is a clique. Recall that $(u,v)$ was chosen  to maximise $N^+(u)$ among all couples $(u,v) \in N^+(r)^2$ with $uv \in E(\vec G)$, so $|N^+_G(u)| \ge |N^+_G(v_1)| \ge 2$ because $\{u, v_2\} \subseteq N^+_G(v_1)$.  Hence, there exists $x$, an outneighbour of $u$ in
  $\vec G$ different from $v$. We have $x \in N^+_{\vec G-f-g/e}(w)$, hence
  $x$ has depth one, that is $x = v_1$, a contradiction. 
  
  So
  $uv_1 \in E(\vec G)$ and by \cref{cl:out-neighbourhood1}, $N^+_{\vec G}(u) = \{v_1,v\}$. 
 Thus $v_1$ is an outneighbour
  of both $r$ and $u$. Consider the arborescence $A$ of $\vec G-f-g/e$ obtained
  by performing a BFS staring at $w$. As $v_1$ has depth one, $wv_1 \in E(A)$ and there are two arborescences $A_1$ and
  $A_2$ of $\vec G$ that are mapped to $A$ when contracting $e$: one contains
  the arcs $ru$ and $rv_1$, the other contains the arcs $ru$ and $uv_1$. So both
  of them contain $e$ but contain neither $f$ nor $g$. However, flipping $f$
  in them results in two different arborescences: as $v \neq v_1$, the arc
  $uv_1$ is contained in $A_1'$ but not in $A_2'$.  This
  contradicts the fact that $A'$ is constant over $\T_{-f-g/e}$. Hence we cannot have  $\tilde B_1 \neq \tilde B_2$.

  \paragraph{Case 2: $\tilde B_1 = \tilde B_2$.}
  Since $\tilde B_1 = \tilde B_2$, the arborescences $B_1$ and $B_2$ are
  adjacent and $P_{-e}$ is in fact a Hamiltonian cycle. We now proceed similarly
  as after the deduction that $\FD{\vec G}r[\T_{-f-g/e}]$ contains a
  Hamiltonian cycle. Either $\tilde B = \tilde B_1$ for every arborescence $B$ in
  $\T_{-e}$ or there are two arborescences $B_3$ and $B_4$, consecutive on
  $P_{-e}$, such that $\tilde B_3 \neq \tilde B_4$. In the second case,
  $\FD{\vec{G}}{r}[\T_{-e}]$ contains a Hamiltonian path whose extremities are
  $B_3$ and $B_4$, so we are back in the case of one of the previous
  sections (\cref{subsubsec:1} or \cref{subsubsec:2}) or in the case of
  the previous paragraph.

  Thus we can assume that for every $B \in \T_{-e}$, we have
  $\tilde B = \tilde B_1 \in \{A',A''\}$ and thus that
  $\FD{\vec{G}}{r}[\T_{-e}]$ is a clique. \cref{lem:flip_clique} can now be
  applied to $\vec G$ with $w:=r$ and $v:= u$ and to $\vec G/e$ with
  $w:=w$ and $v:=v$ to describe precisely the structure of $\vec G$.
  
  \begin{claim}\label{cl:out-neighbourhood2}
    Either $G$ has a Hamiltonian path, or the outneighbourhood of $r$ in $\vec G$ consists of $u$ which has depth zero, and $v$ which has depth three. Moreover, all arborescences of $\T_{-e}$ contain $f$.
  \end{claim}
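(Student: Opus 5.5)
The plan is to exploit the two applications of \cref{lem:flip_clique} that are available at this point. The first is to $\vec G$ with root $r$ and distinguished vertex $u$, which is legitimate since all $B\in\T_{-e}$ give the same $\tilde B$. The second is to $\vec G/e$ with root $w$ and distinguished vertex $v$. From the second we already know, as in the proof of \cref{cl:out-neighbourhood1}, that $\{u,v\}\subseteq N^+(r)\subseteq\{u,v_1,v\}$, that $u$ has depth $0$, and that $v$ has depth at least two. From the first we get, by \ref{it:flip_clique2}, that in $\vec G-e$ every vertex other than $u$ is reached from $r$ by a \emph{unique} path.

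\textbf{Step 1: $v_1\notin N^+(r)$.} Suppose $rv_1\in E(\vec G)$. In $\vec G-f-g/e$ the vertex $v$ is reached along the BFS path $w,v_1,\dots,v_{k-1},v$ given by the structure $M_{k,n}$ or $L_{k,n}$ of \ref{it:flip_clique4}. Since $v_1$ is an outneighbour of $r$ itself, this path lifts to the path $r,v_1,\dots,v_{k-1},v$ in $\vec G-e$, which avoids $u$. Together with the arc $f=rv$ this gives two distinct $r$--$v$ paths in $\vec G-e$, contradicting \ref{it:flip_clique2}.

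Hence $N^+(r)=\{u,v\}$. In $\vec G-e$ the root then has $v$ as its only outneighbour, so every arborescence of $\T_{-e}$ contains $f$, which is the second assertion. In particular $\tilde B_1=A'$ for all $B\in\T_{-e}$.

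\textbf{Step 2: the depth of $v$ is at most three.} Let $k\ge 2$ be the depth of $v$. We also know $N^+(u)\subseteq\{v_1,v\}$ (outneighbours of $u$ other than $v$ have depth one), so $|N^+(u)|\le 2$. In the structure $M_{k,n}$ the vertex $v$ has arcs to all $v_i$ with $i<k-1$, so $|N^+(v)|\ge k-2$. Since $u$ must be reachable from $r$ in $\vec G-e$, it is entered either by an arc $vu$ or by an arc $v_iu$.
\begin{itemize}
\item If $vu\in E(\vec G)$, then $(v,u)$ is an admissible pair in $N^+(r)^2$, so maximality of $|N^+(u)|$ forces $|N^+(v)|\le|N^+(u)|\le 2$.
\item Otherwise $u$ is reached via some $v_i$. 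Then I expect to use uniqueness of paths in $\vec G-e$ to force $u$ to have a single inneighbour in $\vec G-e$, and to use the clique support of $\vec G-r$ to produce an arc $u\to v_j$ with $j\ge 2$. That arc would give $u$ an outneighbour of depth $\ge 2$, a contradiction.
\end{itemize}
In either case this should exclude $k\ge 4$.

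\textbf{Step 3: the case $k=2$.} Here the structure of $\vec G$ is pinned down almost completely. Every vertex lies on the path $w,v_1,\dots,v_n$ with $v$ attached, all remaining arcs are backedges or from the finite list in \ref{it:flip_clique4}, and $N^+(u)\subseteq\{v_1,v\}$. I would list the few arborescences explicitly: $\T_{-e}$ is a clique, $\T_{-f-g/e}$ is a clique, and the ladder has small length. I would then exhibit a Hamiltonian path by hand, for instance $P_{-e}$, then $A'$, then the ladder, then $A''$, then the clique $\T_{-f-g/e}$, checking that the ladder admits a Hamiltonian path from $A'$ to $A''$ (\cref{lem:ladder} cannot prescribe the endpoint). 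This yields the ``$G$ has a Hamiltonian path'' alternative.

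I expect the main obstacle to be Step 2 when $vu\notin E(\vec G)$, because maximality of $N^+(u)$ does not apply directly. Failing the argument above, I would show that this configuration also admits an explicit Hamiltonian path, folding it into the first alternative of the claim.
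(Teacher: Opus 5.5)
\textbf{Gap in Step~2.} The genuine gap is in Step~2, in the subcase $vu\notin E(\vec G)$, which is where you yourself expect trouble. You plan to use uniqueness of paths in $\vec G-e$ to force $u$ to have a single inneighbour. However, \ref{it:flip_clique2}, applied to $\vec G$ with root $r$ and distinguished vertex $u$, only concerns vertices \emph{other than} $u$. It gives no control over the arcs entering $u$; in the $M_{k,n}$-type structure the distinguished vertex has several inneighbours. The situation here is in fact the opposite of what you want. Since $N^+(u)\subseteq\{v_1,v\}$ and $\vec G-r$ has clique support, every $v_j\neq v$ with $j\ge 2$ is an inneighbour of $u$. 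So this subcase is left open, and the fallback (an explicit Hamiltonian path) is not supplied.

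The missing idea is to apply \ref{it:flip_clique2} to $v_1$ rather than to $u$, just as you did for $v$ in Step~1. Suppose $k\ge 4$. Then the arcs $v\to v_1$ and $v\to v_2$ are forced by \ref{it:flip_clique4} (they are among the arcs $v\to v_i$ with $i<k-1$). The arcs $v_2\to v_3$ and $v_3\to v_1$ are also forced. Together with $f$, this gives two distinct paths $r,v,v_1$ and $r,v,v_2,v_3,v_1$ in $\vec G-e$, a contradiction. This is the paper's argument. It disposes of every $k\ge 4$ at once, needs no case split on $vu$, and does not use the maximality of $|N^+(u)|$. Also, your first bullet as written only yields $k\le 4$. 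You must count $u\in N^+(v)$ to get $|N^+(v)|\ge k-1$, and hence $k\le 3$.

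\textbf{Steps~1 and~3.} Step~1 is a valid and slightly more direct alternative to the paper. The paper first shows that every $B\in\T_{-e}$ contains $f$: some arborescence of $\T_{-e}$ contains $f$, and $\tilde B$ is constant. Only then does it exclude $rv_1$, by completing $r,v_1,\dots,v_{k-1},v$ into an arborescence of $\vec G-e$ avoiding $f$. Your two $r$--$v$ paths contradict \ref{it:flip_clique2} immediately.

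Step~3 follows the paper's plan but omits its key point. You must show $|\T_{/e/f}|\le 2$, so that the ladder is an edge or a $4$-cycle. On a longer ladder, two vertices of the same rung (as $A'$ and $A''$ are) need not be joined by a Hamiltonian path. The bound follows from \ref{it:flip_clique4} with $k=2$, together with $N^+(r)=\{u,v\}$ and $N^+(u)\subseteq\{v_1,v\}$. In $\T_{/e/f}$ every arc except the one entering $v_1$ is then forced, because $v_{i-1}$ is the only non-descendant inneighbour of $v_i$ for $i\ge 3$.
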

  \begin{poc}
    We proceed similarly as in \cref{cl:out-neighbourhood1}. By
    \ref{it:flip_clique4} of \cref{lem:flip_clique} applied to
    $\vec G/e$ with $w:= w$ and $v:=v$, we have
    $\{u,v\} \subseteq N^+_{\vec G}(r) \subseteq N^+_{\vec G-f-g/e}(w) \cup
    \{u\} \subseteq \{u,v, v_1\}$ and $u$ has depth 0 while $v$ has depth at
    least 2 (by \ref{it:flip_clique3}).

    If $v$ has depth at least four, then by \ref{it:flip_clique4} of
    \cref{lem:flip_clique} applied to $\vec{G}/e$ with $w:=w$ and $v:=v$,
    the graph $\vec{G}/e$ has two paths from the root $w$ to $v_1$:
    $w,v, v_1$ and $w, v, v_2, v_3, v_1$. Since $v \in N^+_{\vec G}(r)$, the
    graph $\vec G-e$ also contains two paths from the root $r$ to $v_1$:
    $r,v, v_1$ and $r, v, v_2, v_3, v_1$. By \ref{it:flip_clique2} of
    \cref{lem:flip_clique} applied to $\vec G$ with $w:=r$ and $u:=v$, this
    contradicts the fact that $\tilde B$ is constant for $B \in \T_{-e}$. So the
    depth of $v$ is either two or three.
    
    As $f$ can always be flipped in (because it is incident to the root) in $\vec G$, one
    arborescence of $\T_{-e}$ contains $f$, which implies that for every
    $B \in \T_{-e}$, $\tilde B = A'$. So every arborescence of $\T_{-e}$ contains
    $f$, because flipping in $e$ results in $A'$. So $v_1$ is not an
    outneighbour of $r$, otherwise the path $r, v_1, \dots,v_{k-1}, v$ can be
    completed by \cref{lem:completion} into an arborescence that contains neither $e$ nor $f$, a
    contradiction, which proves that $N^+_{\vec G}(r) = \{u,v\}$.

    We can now prove that $v$ has depth exactly three. Assume towards
    contradiction that $v$ has depth two. By \cref{lem:flip_clique} applied to $\vec G/e$ with $w:=w$ and $v:=v$, the only possibilities for the subgraph of $G$ induced by $\{r,u,v,v_1,v_2\}$ are those drawn on \cref{fig:n2}.
    \begin{figure}[h!]
       \centering
       \begin{subfigure}{.4\textwidth}
       \centering
       \begin{tikzpicture}[decoration={
           markings,
           mark=at position 0.6 with {\arrow[scale=1.5]{>}}}
         ]
         
         \foreach \i in {0,1,2}{
           \coordinate (v\i) at ($(0,{-\i})$); 
         }

         \node[vertex,double] (r) at (1,0){};
         \draw[postaction={decorate}]  (r) -- (v0);
         \draw[postaction={decorate}, bend left] (r) to (v2);
         
         \draw[postaction={decorate}]  (v0) -- (v1);
         \draw[postaction={decorate}] (v1) -- (v2);

         \draw[postaction={decorate}, bend left]  (v0) to (v2);
         \draw[postaction={decorate}, bend left,red]  (v2) to (v1);
         \draw[postaction={decorate}, bend left=40,red]  (v2) to (v0);
         \draw[postaction={decorate}, bend left,red]  (v1) to (v0);

         \foreach \i in {0,...,2}{
           \node[vertex] at (v\i){};
         }
         \node[left = .5cm of v0] {$u$};
         \node[left = .5cm of v1] {$v_1$};
         \node[left = .5cm of v2] {$v=v_2$};
         \node[right = 1pt of r] {$r$};
     \end{tikzpicture}
     \caption{\label{fig:n21}}
     \end{subfigure}
    \hfill
    \begin{subfigure}{.4\textwidth}
    \centering
    \begin{tikzpicture}[decoration={
           markings,
           mark=at position 0.6 with {\arrow[scale=1.5]{>}}}
         ]
    \foreach \i in {0,1,2}{
           \coordinate (v\i) at ($(0,{-\i})$); 
         }
         \node[vertex] (v) at (1,-2){};

         \node[vertex,double] (r) at (1,0){};
         \draw[postaction={decorate}]  (r) -- (v0);
         \draw[postaction={decorate}, bend left] (r) to (v);
         
         \draw[postaction={decorate}]  (v0) -- (v1);
         \draw[postaction={decorate}] (v1) -- (v2);

         \draw[postaction={decorate}]  (v0) to (v);
         \draw[postaction={decorate}, bend left,red]  (v2) to (v1);
         \draw[postaction={decorate}, bend left=40]  (v2) to (v0);
         \draw[postaction={decorate}, bend left,red]  (v1) to (v0);
         \draw[postaction={decorate}]  (v2) to (v);
         \draw[postaction={decorate}]  (v1) to (v);
         \draw[postaction={decorate}, bend right,red]  (v) to (v0);
         \draw[postaction={decorate}, bend left,red]  (v) to (v1);

         \foreach \i in {0,...,2}{
           \node[vertex] at (v\i){};
         }
         \node[left = .5cm of v0] {$u$};
         \node[left = .5cm of v1] {$v_1$};
         \node[left = .5cm of v2] {$v_2$};
         \node[right = .5cm of v] {$v$};
         \node[right = 1pt of r] {$r$};
       \end{tikzpicture}
      \caption{\label{fig:n22}}
       \end{subfigure}
       \caption{The possible graphs $\vec G[r,u,v,v_1,v_2]$ if $v$ has depth. The black arcs
         are contained in $\vec G$, while the red arcs are optional.}
       \label{fig:n2}
     \end{figure}
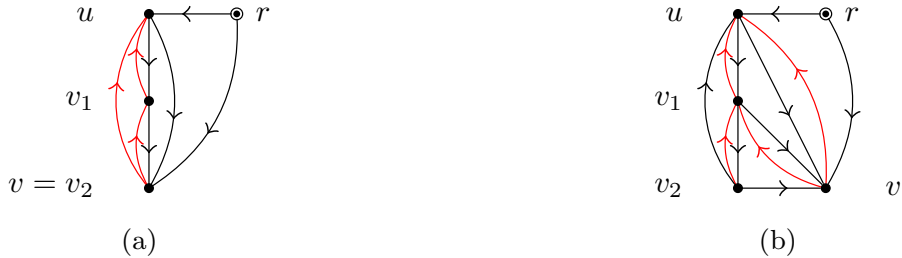
     For any of these possibilities, there are at most two arborescences of $\vec G[r,u,v,v_1,v_2]$ that contain the arcs $e$ and $f$. We also observe from \ref{it:flip_clique4} that for every $3\le i \le n$, $v_{i-1}$
    is the unique inneighbour of $v_i$ that is not one of its descendant. As a result, all arborescences of $G$ contain the path $v_2, \dots v_3$ and $\T_{/e/f}$ contains at most two arborescences. This implies that $\T_{/e/f} \cup T_{/e/g}$ is an edge or a 4-cycle. In either case, $\T$ contains a Hamiltonian path that first visits $\T_{-e}$, then $\T_{/e/f}\cup \T_{/e/g}$ and finally $\T_{-f-g/e}$.
    \end{poc}

  We first recall what we know about the outneighbourhoods of $r$, $u$, $v$ and
  $v_1$. We have $N^+(r) = \{u,v\}$ by \cref{cl:out-neighbourhood2},
  $N^+(u) = \{v_1, v\}$ because $v_1$ has depth one and $v_1 \notin N^+(r)$.  By
  \ref{it:flip_clique4} of \cref{lem:flip_clique} applied to $G/e$ with
  $w:=w$ and $v:=v$, we have $\{v_1\} \subseteq N^+(v) \subseteq \{u, v_1,v_2\}$
  and $N^+(v_1) = \{v_2\}$. So $N^+(\{r,u,v,v_1\}) \cap (\{v_i : i \ge 3\}\setminus \{v\}) = \emptyset$ and
  in every arborescence of $\T_{/e/g}$, there are only two possibilies for the
  path from $r$ to $v_2$: either $r, u, v, v_2$ or $r, u, v, v_1,v_2$. By
  \ref{it:flip_clique4} of \cref{lem:flip_clique} applied to $G/e$ with
  $w:=w$ and $v:=v$ again, for all $i \ge 3$,
  $N^-(v_{i}) \cap (\{r,u,v\} \cup \{v_j : j <i\}) = \{v_{i-1}\}$, a
  straightforward induction shows that in every arborescence of $\T_{/e/g}$, there are
  only two possibilities for the path from $r$ to $v_i$:
  $r, u, v, v_2, \dots, v_i$ or $r, u, v, v_1,v_2, \dots, v_i$. As a result,
  there are at most two arborescences in $\T_{/e/g}$. This implies that
  $\FD{\vec G}r[\T_{/e/f} \cup \T_{/e/g}]$ is either an edge or a 4-cycle. In
  either case, there is a Hamiltonian path $\FD{\vec G}r$, that first visits
  $\T_{-e}$, then $\T_{/e/f}$ (because by \cref{cl:out-neighbourhood2} all
  arborescences of $\T_{-e}$ contain $f$, so flipping in $e$ gives $A'$), then
  $\T_{/e/g}$ and finally $\T_{-f-g/e}$.
\end{proof}

\section*{Acknowledgements}
This work was initiated during the workshop Order \& Geometry 2024 (see
\url{https://sites.google.com/view/ordergeometry2024}).
The third author was supported by the Polish National Science Centre under grant
number UMO-2019/34/E/ST6/00443 and UMO-2023/05/Y/ST6/00079 within the WEAVE-UNISONO program.

\bibliographystyle{alpha}
\bibliography{flip-trees}

\end{document}